\theoremstyle{plain}
\newtheorem{thm}{Theorem}[section]
\newtheorem{lem}[thm]{Lemma}
\newtheorem{cor}[thm]{Corollary}
\newtheorem{prop}[thm]{Proposition}
\theoremstyle{definition}
\newtheorem{rem}[thm]{Remark}
\mathchardef\semic="303B
\newcommand{\dirac}{{\mathbf D}}
\newcommand{\R}{{\mathbb R}}
\newcommand{\N}{{\mathbb N}}
\newcommand{\C}{{\mathbb C}}
\newcommand{\mH}{{\mathcal H}}
\newcommand{\mZ}{{\mathcal Z}}
\newcommand{\mL}{{\mathcal L}}
\newcommand{\mA}{{\mathcal A}}
\newcommand{\mR}{{\mathcal R}}
\newcommand{\mT}{{\mathcal T}}
\newcommand{\mS}{{\mathcal S}}
\DeclareMathOperator{\re}{Re}
\newcommand{\nul}{\textsf{N}}
\newcommand{\ran}{\textsf{R}}
\newcommand{\dom}{\textsf{D}}
\newcommand{\clos}[1]{\overline{#1}}
\newcommand{\conj}[1]{\overline{#1}}
\newcommand{\sgn}{\text{{\rm sgn}}}
\newcommand{\barint}{\mbox{$ave \int$}}
\newcommand{\divv}{{\text{{\rm div}}}}
\newcommand{\curl}{{\text{{\rm curl}}}}
\newcommand{\tdd}[2]{\tfrac{\partial #1}{\partial #2}}
\newcommand{\wt}{\widetilde}
\newcommand{\ta}{{\scriptscriptstyle \parallel}}
\newcommand{\no}{{\scriptscriptstyle\perp}}
\newcommand{\pd}{\partial}
\newcommand{\ep}{\varepsilon}
\newcommand{\uT}{{\underline T}}
\newcommand{\bet}{{\mathcal B}}
\newcommand{\loc}{\text{{\rm loc}}}
\def\barint_#1{\mathchoice
            {\mathop{\vrule width 6pt
height 3 pt depth -2.5pt
                    \kern -8.8pt
\intop \kern -4pt}\nolimits_{#1}}%
            {\mathop{\vrule width 5pt height
3 pt depth -2.6pt
                    \kern -6.5pt
\intop \kern -4pt}\nolimits_{#1}}%
            {\mathop{\vrule width 5pt height
3 pt depth -2.6pt
                    \kern -6pt
\intop \kern -4pt}\nolimits_{#1}}%
            {\mathop{\vrule width 5pt height
3 pt depth -2.6pt
          \kern -6pt \intop \kern -4pt}\nolimits_{#1}}}
          \def\bariint_#1{\mathchoice
            {\mathop{\vrule width 10pt
height 3 pt depth -2.5pt
                    \kern -12.8pt
\intop \kern -10pt\intop \kern -4pt}\nolimits_{#1}}%
            {\mathop{\vrule width 9pt height
3 pt depth -2.6pt
                    \kern -10.5pt
\intop \kern -10pt\intop \kern -4pt}\nolimits_{#1}}%
            {\mathop{\vrule width 9pt height
3 pt depth -2.6pt
                    \kern -10pt
\intop \kern -10pt\intop \kern -4pt}\nolimits_{#1}}%
            {\mathop{\vrule width 9pt height
3 pt depth -2.6pt
          \kern -10pt \intop \kern -10pt\intop \kern -4pt}
      \nolimits_{  #1}}}
\definecolor{gr}{rgb}   {0.,   0.8,   0. } 
\definecolor{bl}{rgb}   {0.,   0.5,   1. } 
\definecolor{mg}{rgb}   {0.7,  0.,    0.7}
\title[block triangular coefficients]{On $L^2$ Solvability of BVPs for elliptic systems}
\author[P. Auscher]{Pascal Auscher}
\address{Univ. Paris-Sud, laboratoire de Math\'ematiques, UMR 8628 du CNRS, F-91405 {\sc Orsay}} 
\email{pascal.auscher@math.u-psud.fr}
\author[A. McIntosh]{Alan McIntosh}
\address{Centre for Mathematics and its 
Applications, Mathematical Sciences Institute,
Australian National University, {\sc Canberra} ACT 0200, 
Australia}
\email{Alan.McIntosh@maths.anu.edu.au}
\author[M. Mourgoglou]{Mihalis Mourgoglou}
\address{Univ. Paris-Sud, laboratoire de Math\'ematiques, UMR 8628 du CNRS, F-91405 {\sc Orsay}} 
\email{mihalis.mourgoglou@math.u-psud.fr}
\begin{document}

\begin{abstract}
In this article we prove solvability results for $L^2$ boundary value problems of  some elliptic systems $Lu=0$ on the upper half-space $\R^{n+1}_{+}, n\ge 1$, with transversally independent coefficients. We use the  first order  formalism introduced by Auscher-Axelsson-McIntosh and further developed with a better understanding of the classes of solutions in the subsequent work of Auscher-Axelsson. The interesting fact is  that we prove only half of the Rellich boundary inequality without knowing the other half.  

\end{abstract}

\subjclass[2010]{35J55, 35J25, 42B25, 42B37}

\keywords{elliptic systems,  variational solutions, Dirichlet and Neumann problems, square functions, non-tangential maximal functions, Tb theorems}

\maketitle

\section{Introduction}

The  goal of this paper is to study Rellich type estimates for some elliptic systems by using the first order formalism introduced in \cite{AAM}. 

We begin by  recalling the classical Lax-Milgram existence  theorems of variational (or energy) solutions for divergence form second order partial differential equations and make clear how we understand them in unbounded domains. It is well-known that both Neumann and regularity problems have unique solutions in the energy class.  We reformulate this using the $DB$ formalism in order to represent these solutions via a semi-group. We then obtain factorisations of  the Dirichlet to Neumann map and of its inverse in $\dot H^{-1/2}$ topologies which are related to (abstract) boundary layer potentials.  This part is completely general and true for any such elliptic system.

In the last part, we assume that the coefficients are block triangular (see below for definition). Although such an assumption is rather restrictive, it brings up a  phenomenon which we think interesting on its own. Using these factorisations, we prove that,  in the  triangular situation which corresponds to making the conormal derivative proportional to the transversal derivative,  the Neumann to Dirichlet map
is bounded in the $L^2$ topology, that is, solutions (we shall make clear the meaning of solutions) satisfy half of the  boundary Rellich estimate 
$$
\|\nabla_{tan}u\|_{2}\le C\|\pd_{A}u\|_{2}
$$
which implies that the Neumann problem with $L^2$ data is solvable.  
For the adjoint situation, it is the Dirichlet to Neumann map that is bounded on $L^2$, hence the 
opposite Rellich inequality  for solutions
$$
\|\pd_{A}u\|_{2} \le C \|\nabla_{tan}u\|_{2}
$$
holds so that the regularity problem with $L^2$ data is solvable. In each case, we do not know about boundedness of the inverse, that is, we do not know  the other half of the Rellich estimate (and we do not expect it by any means) which is the alluded to phenomenon: usually Rellich estimates are proved both ways. We also show that  the Dirichlet problem with  $L^2$ data is solvable in the same block triangular situation as for the Neumann problem. 

We remark that the conjunction of the two situations is the block-diagonal case for which 
$$
\|\nabla_{tan}u\|_{2}\approx \|\pd_{A}u\|_{2}
$$
 is known to be equivalent to  the Kato square root estimate \cite{AHLMT}. 

As the reader might notice, a feature of our proof is the use of one equivalent formulation of the Kato square root estimate (for an auxiliary operator) which shows a tight connection between Rellich estimates and $Tb$ technology.

In a subsequent paper of two of us (the first and third named author) with S. Hofmann,  Neumann solvability results for Hardy space data are obtained in the triangular situations of this paper in the case of systems  of such pde's  satisfying the De Giorgi regularity condition.  

\bigskip

\paragraph{\bf Acknowledgments} {Auscher thanks the Mathematical Science Institute of the Australian National University for hospitality and support where this work started. McIntosh acknowledges support from the Australian Government through the Australian Research Council. Mourgoglou was supported by the Fondation Math\'emati\-que Jacques Hadamard and thanks the University Paris-Sud for hospitality. We also thank Steve Hofmann and Andreas Ros\'en for discussions pertaining to this work.}
 
\section{notation}

The system of  equations  is 
\begin{equation}  \label{eq:divform}
 (Lu)^\alpha(t,x)=  \sum_{i,j=0}^n\sum_{\beta= 1}^m \pd_i\Big( A_{i,j}^{\alpha, \beta}(x) \pd_j u^{\beta}(t,x)\Big) =0,\qquad \alpha=1,\ldots, m
\end{equation}
in $\R^{1+n}_+=(0,\infty)\times \R^n$,
where $\pd_0= \tdd{}{t}$ and $\pd_i= \tdd{}{x_{i}}$, if $i=1,\ldots,n$.
We assume
\begin{equation}   \label{eq:boundedmatrix}
  A=(A_{i,j}^{\alpha,\beta}(x))_{i,j=0,\ldots, n}^{\alpha,\beta= 1,\ldots,m}\in L_\infty(\R^n;\mL(\C^{2m})),
\end{equation}
and that $A$ is strictly accretive on  the subspace $\mH^0$  of $ L_2(\R^n;\C^{(1+n)m})$ defined by $(f_{j}^\alpha)_{j=1,\ldots,n}$ is curl free in $\R^n$ for all $\alpha$,  that is
there exists $\lambda>0$ such that for all $f\in\mH^0$
\begin{equation}   \label{eq:accrassumption}
  \sum_{i,j=0}^n\sum_{\alpha,\beta=1}^m \int_{\R^n} \re (A_{i,j}^{\alpha,\beta}(x)f_j^\beta(x) \conj{f_i^\alpha(x)}) dx\ge \lambda 
   \sum_{i=0}^n\sum_{\alpha=1}^m \int_{\R^n} |f_i^\alpha(x)|^2dx.
\end{equation}

The system \eqref{eq:divform} is always considered in the sense of distributions with weak solutions, that is,  $H^1_{loc}$ solutions in the particular domain under consideration.

We stress that our results are valid for any $m$ but set for notational simplicity $m=1$ from now on.  In this case, the accretivity condition above is equivalent to the usual pointwise accretivity 
condition 
$$
\re \sum_{i,j=0}^n A_{i,j}(x)\xi_j \conj{\xi_i} \ge \lambda 
   \sum_{i=0}^n |\xi_{i}|^2. $$
   It is convenient to write $A$ in the block form
$$
  A(x)= \begin{bmatrix} a(x) & b(x)\\ c(x) &d(x) \end{bmatrix}
$$
where $a$ is scalar-valued, $b,c$ vector-valued and $d$ $n\times n$ matrix-valued. 
Call $\mA$ the set of $2 \times 2$ block matrices $A$ with these properties.

All our estimates in this article depend only on ellipticity constants $\Lambda=\|A\|_{\infty}$ and the largest $\lambda$ in the accretivity inequality   \eqref{eq:accrassumption}.  For non negative quantities $a,b$, the notation $a\lesssim b$ means $a\le C b$ where $C$ is a constant depending on the parameters at hand. The notation $a \approx b$ means both $a\lesssim b$ and $b\lesssim a$.

\section{Energy solutions}

In this section we recall the usual construction of variational or energy solutions. Although this is fairly classical and mostly  follows \cite{AAM}, we need to stress a few points and set up some notation.
This uses the homogeneous Sobolev space $\dot H^1(\R^{1+n}_+)$ of $u\in L^2_{loc}(\R^{1+n}_+)$ such that  $\nabla_{t,x} u \in L^2(\R^{1+n}_+)$, equipped with the semi-norm
$\|u\|^2_{\dot H^1}:= \iint_{\R^{1+n}_+}|\nabla_{t,x} u|^2\, dtdx$ (note that we could have supposed $u\in \mathcal{D}'(\R^{1+n}_{+})$ as it is not hard to check that $u$ can be identified with a $L^2_{loc}(\R^{1+n}_+)$ function when $\nabla u \in L^2$). This is a Banach space modulo constants.

\begin{lem} \label{lem:density} $C^\infty_{0}(\overline{\R^{1+n}_+})$ is dense in $\dot H^1(\R^{1+n}_+)$. The trace on $\R^n$ is  bounded from $\dot H^1(\R^{1+n}_+)$ onto the homogoneous Sobolev space $\dot H^{1/2}(\R^n)$, which we  define as the closure of $C^\infty_{0}(\R^n)$ for the semi-norm $\left(\int_{\R^n}|\xi||\hat f(\xi)|^2\, d\xi\right)^{1/2}$ (with $\hat f$ designating the Fourier transform). 
\end{lem}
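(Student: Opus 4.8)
The plan is to treat the two assertions separately, starting with density and then handling the trace.

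\emph{Density of $C^\infty_0(\overline{\R^{1+n}_+})$ in $\dot H^1$.} First I would reduce to compactly supported $u$: given $u\in\dot H^1(\R^{1+n}_+)$, normalize so that the relevant constant is $0$ and cut off with $\eta(t,x)=\chi(\epsilon t,\epsilon x)$ where $\chi\in C^\infty_0$ equals $1$ near the origin; the point is that $\|(\nabla\eta)u\|_2\to 0$ because $|\nabla\eta|\lesssim\epsilon \mathbf 1_{\{|(t,x)|\approx 1/\epsilon\}}$ and $\|u/|(t,x)|\|_{L^2}$ is controlled by $\|\nabla u\|_2$ on the annulus via a Hardy/Poincaré inequality (one must be slightly careful near $t=0$, but the half-space Hardy inequality with the full gradient is standard). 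Once $u$ has compact support in $\overline{\R^{1+n}_+}$, I would mollify: extend to a neighbourhood, or more simply translate in the $t$-direction, $u_\tau(t,x)=u(t+\tau,x)$, which lands in $\dot H^1$ of a larger half-space, then mollify in all variables; $u_\tau\to u$ in $\dot H^1$ as $\tau\to 0$ by continuity of translation in $L^2$ applied to $\nabla u$, and the mollification converges as well, yielding functions in $C^\infty_0(\overline{\R^{1+n}_+})$.

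\emph{The trace operator.} For $u\in C^\infty_0(\overline{\R^{1+n}_+})$ I would write, for the trace $f(x)=u(0,x)$,
\[
|\hat f(\xi)|^2 = \Big| -\int_0^\infty \pd_t \widehat{u(t,\cdot)}(\xi)\, dt\Big|^2
\le \Big(\int_0^\infty e^{-2t|\xi|}\,dt\Big)\Big(\int_0^\infty e^{2t|\xi|}|\pd_t\widehat{u(t,\cdot)}(\xi)|^2\,dt\Big),
\]
which is not quite right since the weight blows up; instead the clean route is to use the harmonic extension. Let $v(t,x)$ be the Poisson extension of $f$; then $\|v\|_{\dot H^1(\R^{1+n}_+)}^2\approx\int|\xi||\hat f(\xi)|^2\,d\xi$ by a direct Fourier-side computation ($\widehat{v(t,\cdot)}(\xi)=e^{-t|\xi|}\hat f(\xi)$, and $\iint(|\xi|^2+|\xi|^2)e^{-2t|\xi|}|\hat f|^2\,dt\,d\xi=\int|\xi||\hat f|^2\,d\xi$). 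This shows the trace is \emph{onto} $\dot H^{1/2}$ with a bounded right inverse. For boundedness of the trace itself, given $u\in C^\infty_0(\overline{\R^{1+n}_+})$ with trace $f$, I would use $f=u(0,\cdot)$ and the identity $\partial_t(|\widehat{u(t,\cdot)}(\xi)|^2)=2\re(\overline{\widehat{u}}\,\widehat{\pd_t u})$ integrated from $0$ to $\infty$, combined with $|\hat u(0,\xi)|^2 = -\int_0^\infty\pd_t|\hat u(t,\xi)|^2\,dt \le 2\int_0^\infty |\widehat{u(t,\cdot)}(\xi)|\,|\widehat{\pd_t u(t,\cdot)}(\xi)|\,dt$; multiplying by $|\xi|$, applying Cauchy–Schwarz in $t$ with the split $|\xi|=|\xi|^{1/2}\cdot|\xi|^{1/2}$ and $|\hat u|\cdot|\widehat{\pd_t u}|$, and integrating in $\xi$ gives $\int|\xi||\hat f|^2\,d\xi\lesssim \iint(|\xi|^2|\hat u|^2 + |\widehat{\pd_t u}|^2)\,dt\,d\xi\le\|u\|_{\dot H^1}^2$. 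Then extend to all of $\dot H^1$ by the density just proved.

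\emph{Main obstacle.} The routine Fourier estimates are harmless; the delicate point is the density argument at the boundary $t=0$ — specifically justifying the cutoff step, which requires a Hardy-type inequality $\int_{\R^{1+n}_+}|u|^2/|(t,x)|^2\lesssim\|\nabla u\|_2^2$ (valid modulo constants in dimension $1+n\ge 2$) so that the error term $\|(\nabla\eta)u\|_2$ genuinely vanishes, and ensuring the mollification/translation procedure produces functions that are smooth \emph{up to and including} $\{t=0\}$ with compact support there. This is where I would be most careful; everything else follows standard patterns.
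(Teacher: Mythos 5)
Your treatment of the trace is fine (the paper just asserts it is "classical" once density is known; your Fourier-side Cauchy--Schwarz argument and the Poisson-extension right inverse are both standard and correct). The gap is in the density argument, exactly at the step you flag as delicate.

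Your cutoff step needs $\|(\nabla\eta)u\|_{2}\to 0$, which you propose to get from the Hardy inequality $\int_{\R^{1+n}_+}|u|^2/|(t,x)|^2\lesssim\|\nabla u\|_2^2$ "modulo constants in dimension $1+n\ge 2$." That inequality is false in dimension $2$, i.e.\ for $n=1$: take, for instance, $u$ smooth with $u(z)=\log\log|z|$ for $|z|\ge e$, which has $\nabla u\in L^2$ but $\int |u|^2/|z|^2=\infty$, and subtracting a constant does not help. The Hardy inequality with the full gradient and $|z|^{-2}$ weight requires $1+n\ge 3$, so your cutoff argument only closes for $n\ge 2$, whereas the paper covers $n\ge 1$. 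Even where Hardy does hold, notice that the inequality alone gives \emph{boundedness} of the error, not smallness; vanishing then comes from dominated convergence on the tail, an extra (small but necessary) step.

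The paper's proof avoids this dimension sensitivity. After extending $u$ across $t=0$ by reflection to $\dot H^1(\R^{1+n})$, it sets $u_k=(u-c_k)\varphi_k$ with $c_k$ the mean over $B(0,2^{k+1})$ and $\varphi_k(x)=\varphi(2^{-k}x)$; the scaled Poincar\'e inequality then shows $\nabla u_k$ is \emph{bounded} in $L^2$ and converges \emph{weakly} to $\nabla u$. Mazur's lemma upgrades this to norm convergence of convex combinations, after which mollification finishes. The key insight you are missing is precisely this: one does not need the cutoff error to tend to zero in norm, only weakly, and Mazur's lemma is what converts weak to strong approximation without any Hardy inequality and without any restriction on $n$. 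If you want to keep a direct strong-convergence argument, you would have to replace your linear-scale cutoff $\chi(\epsilon z)$ by a logarithmic cutoff to handle $n=1$; the Mazur route is cleaner.
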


\begin{proof} Once the density is shown, the trace result is classical. Let $u\in \dot H^1(\R^{1+n}_+)$. As for any  compact subset $K$  of $\R^n$ and  $0<t_{0}<t_{1}<\infty$,
$$\int_{K}|u(t_{1}, x) - u(t_{0},x)|^2\, dx\le  (t_{1}-t_{0}) \iint_{\R^{1+n}_+} |\pd_{t}u(t,x)|^2\, dtdx,
$$
 $u$ extends to an element  $u\in L^2_{loc}(\overline{\R^{1+n}_+})$. By the reflection principle, we can extend $u$ to an element in  $  \dot H^1(\R^{1+n}) $ which we still call $u$.  We claim we can find a sequence $u_{k}$ of $L^2(\R^{1+n})$  functions with compact support that converges to $u$ in $ \dot H^1(\R^{1+n}_+)$. Indeed, by Mazur's lemma, it suffices to have a sequence with  weak convergence, that is such that $\nabla u_{k} \rightarrow  \nabla u$ weakly in $L^2$.  It is easy to see using Poincar\'e inequality, and this is where we need to know \textit{a priori} that $u\in L^2_{loc}$,  that $u_{k}= (u-c_{k})\varphi_{k}$ has this property when $c_{k}$ is the mean of $u$ on the ball $B(0,2^{k+1})$ and $\varphi_{k}(x)= \varphi(2^{-k}x)$ with $\varphi$ is a smooth function that vanishes outside $B(0,2)$ and that is 1 on $B(0,1)$. Finally, it suffices to mollify this sequence to conclude.  
\end{proof}

\begin{thm}\label{thm:Neu}  Given $\ell \in \dot H^{-1/2}(\R^n)$, there exists $u \in \dot H^1(\R^{1+n}_+)$, unique up to a constant, such that $\iint_{\R^{1+n}_+} A \nabla u \cdot \overline{\nabla \phi}\, dtdx = \langle \ell, \varphi\rangle$, for any $\phi \in \dot H^1(\R^{1+n}_+)$ with trace $\varphi$.  This function $u$ is a weak solution of $Lu=0$ in $\R^{1+n}_+$. We define the conormal derivative of $u$ at the boundary  to be $\pd_{\nu_{A}}u_{|t=0}=-\ell$ (we use the inward unit normal convention).
\end{thm}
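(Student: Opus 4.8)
The plan is a direct application of the Lax--Milgram lemma on the Hilbert space $\dot H^1(\R^{1+n}_+)$, taken modulo constants and equipped with the inner product $\dual{u}{\phi}_{\dot H^1}=\iint_{\R^{1+n}_+}\nabla_{t,x}u\cdot\overline{\nabla_{t,x}\phi}\,dtdx$; on this quotient it is a genuine inner product, the constants being exactly the elements of zero seminorm. Introduce the sesquilinear form
$$
\mathcal B(u,\phi):=\iint_{\R^{1+n}_+}A\nabla u\cdot\overline{\nabla\phi}\,dtdx,
$$
which descends to the quotient since it only involves gradients and which obeys $|\mathcal B(u,\phi)|\le\Lambda\,\|u\|_{\dot H^1}\|\phi\|_{\dot H^1}$ by \eqref{eq:boundedmatrix}. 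On the data side, Lemma~\ref{lem:density} furnishes a bounded trace operator $\dot H^1(\R^{1+n}_+)\to\dot H^{1/2}(\R^n)$; because constant functions have vanishing trace, $\phi\mapsto\dual{\ell}{\varphi}$ descends to a bounded functional on the quotient, antilinear in $\phi$, of norm $\lesssim\|\ell\|_{\dot H^{-1/2}}$.

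The one step that needs an argument is coercivity of $\mathcal B$, and here one must notice that accretivity is hypothesized only on the curl free subspace $\mH^0\subset L_2(\R^n;\C^{1+n})$, not on all of $L_2$. Fix $u\in\dot H^1(\R^{1+n}_+)$. By Fubini, for a.e.\ $t>0$ the slice $u(t,\cdot)$ lies in $H^1_{\loc}(\R^n)$ with $\nabla_x u(t,\cdot)\in L_2$, so its spatial gradient is curl free and the vector field $f(t,\cdot):=(\pd_t u,\nabla_x u)(t,\cdot)$ belongs to $\mH^0$. Applying \eqref{eq:accrassumption} to $f(t,\cdot)$ and integrating in $t\in(0,\infty)$ gives
$$
\re\mathcal B(u,u)=\iint_{\R^{1+n}_+}\re\big(A\nabla u\cdot\overline{\nabla u}\big)\,dtdx\ \ge\ \lambda\iint_{\R^{1+n}_+}|\nabla_{t,x}u|^2\,dtdx=\lambda\,\|u\|_{\dot H^1}^2 .
$$
(For $m=1$ one may bypass the slicing, since the pointwise accretivity recalled in Section~2 gives the integrand inequality directly.) Lax--Milgram now produces a $u\in\dot H^1(\R^{1+n}_+)$, unique up to a constant, with $\mathcal B(u,\phi)=\dual{\ell}{\varphi}$ for every $\phi\in\dot H^1(\R^{1+n}_+)$ of trace $\varphi$.

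The remaining assertions are then read off from this identity. Restricting $\phi$ to $C^\infty_0(\R^{1+n}_+)$, whose trace vanishes, yields $\iint_{\R^{1+n}_+}A\nabla u\cdot\overline{\nabla\phi}\,dtdx=0$, which is precisely the weak formulation of \eqref{eq:divform}; since $u\in\dot H^1\subset H^1_{\loc}(\R^{1+n}_+)$ anyway, $u$ is a weak solution of $Lu=0$ in $\R^{1+n}_+$. For the conormal derivative, observe first that the left-hand side of the identity depends on $\phi$ only through $\varphi$, so the definition is intrinsic; and when $A$ and $u$ are smooth up to $\{t=0\}$, integrating $\mathcal B(u,\phi)=\dual{\ell}{\varphi}$ by parts and using $Lu=0$ identifies $-\ell$ with the classical conormal derivative $\rad\cdot A\nabla u_{|t=0}$ taken with the inward unit normal $\rad=(1,0,\dots,0)$. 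Hence $\pd_{\nu_A}u_{|t=0}:=-\ell$ is the consistent extension of the classical conormal derivative to arbitrary $A\in\mA$.

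The main obstacle is thus the coercivity step: because accretivity is assumed only on $\mH^0$, one recovers coercivity on the half-space by slicing in the transversal variable and using that spatial gradients are curl free. Everything else --- the boundedness and well-definedness of $\mathcal B$ and of the functional on the quotient by constants (which is where Lemma~\ref{lem:density} and the homogeneity of $\dot H^{\pm1/2}$ enter), the passage to the equation via $C_0^\infty$ test functions, and the integration-by-parts identification of the conormal derivative --- is routine.
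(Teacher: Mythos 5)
Your proposal is correct and is essentially the paper's proof, spelled out: the paper disposes of Theorem~\ref{thm:Neu} in one line by invoking Lax--Milgram on $\dot H^1(\R^{1+n}_+)/\C$ together with the duality $\dot H^{-1/2}=(\dot H^{1/2})^*$, and you have simply made the boundedness, the coercivity (correctly recovered via slicing in $t$ and the fact that spatial gradients lie in $\mH^0$), and the identification of the weak solution and conormal derivative explicit.
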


We say that $u$ is the \emph{energy solution of the Neumann problem for $Lu=0$ with Neumann data} $-\ell$.

\begin{proof} This is just the Lax-Milgram theorem in the Hilbert space $\dot H^1(\R^{1+n}_+)/\C$ using that $\dot H^{-1/2}(\R^n)$ is the dual space of $\dot H^{1/2}(\R^n)$.  \end{proof}

\begin{thm} Given $f \in \dot H^{1/2}(\R^n)$, there exists $v \in \dot H^1(\R^{1+n}_+)$, unique up to a constant, such that $Lv=0$ in $\R^{1+n}_+$  and $v_{|t=0}=f$ with equality in $\dot H^{1/2}(\R^n)$. Furthermore, there exists $\ell \in \dot H^{-1/2}(\R^n)$ such that  $\iint_{\R^{1+n}_+} A \nabla u \cdot \overline{\nabla \phi}\, dtdx = \langle \ell, \varphi\rangle$ for any  $\varphi \in \dot H^{1/2}(\R^n)$ and any extension $\phi \in \dot H^1(\R^{1+n}_+)$ of $\varphi$. \end{thm}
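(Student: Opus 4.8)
The plan is to reduce to homogeneous boundary data and apply the Lax--Milgram theorem on the Hilbert space $\dot H^1_0(\R^{1+n}_+)$, defined as the closure of $C^\infty_0(\R^{1+n}_+)$ (functions compactly supported in the \emph{open} half-space) for the $\dot H^1$ semi-norm; the Hardy inequality $\iint_{\R^{1+n}_+}|w|^2\,t^{-2}\,dtdx\lesssim\|w\|_{\dot H^1}^2$ shows this semi-norm is a norm on $\dot H^1_0$, so it is genuinely a Hilbert space. First I would use Lemma~\ref{lem:density} to fix an extension $\phi_0\in\dot H^1(\R^{1+n}_+)$ of $f$ with $\|\phi_0\|_{\dot H^1}\lesssim\|f\|_{\dot H^{1/2}}$. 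The sesquilinear form $(w,\psi)\mapsto\iint_{\R^{1+n}_+}A\nabla w\cdot\overline{\nabla\psi}\,dtdx$ is bounded on $\dot H^1_0$ by $\Lambda$, and it is coercive there: for $w\in\dot H^1_0$ and a.e.\ $t>0$ the vector $\nabla_{t,x}w(t,\cdot)$ lies in $\mH^0$ because $\nabla_x w(t,\cdot)$ is a gradient, hence curl-free, so \eqref{eq:accrassumption} applied slicewise and integrated in $t$ gives $\re\iint_{\R^{1+n}_+}A\nabla w\cdot\overline{\nabla w}\,dtdx\ge\lambda\|w\|_{\dot H^1}^2$.

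Lax--Milgram then produces a unique $w\in\dot H^1_0(\R^{1+n}_+)$ with $\iint_{\R^{1+n}_+}A\nabla w\cdot\overline{\nabla\psi}\,dtdx=-\iint_{\R^{1+n}_+}A\nabla\phi_0\cdot\overline{\nabla\psi}\,dtdx$ for all $\psi\in\dot H^1_0$, and I set $v:=\phi_0+w$. Then $v\in\dot H^1(\R^{1+n}_+)$; its trace equals $f$ since $w$ has vanishing trace; and testing against $\psi\in C^\infty_0(\R^{1+n}_+)\subset\dot H^1_0$ gives $\iint A\nabla v\cdot\overline{\nabla\psi}=0$, i.e.\ $Lv=0$ weakly in $\R^{1+n}_+$. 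For uniqueness up to a constant, if $v_1,v_2$ are two such solutions then $v_1-v_2$ has zero trace, hence (see the last paragraph) lies in $\dot H^1_0$; since both are weak solutions, testing $Lv_i=0$ against $v_1-v_2$ (legitimate by density of $C^\infty_0$ in $\dot H^1_0$) and subtracting yields $0=\re\iint A\nabla(v_1-v_2)\cdot\overline{\nabla(v_1-v_2)}\ge\lambda\|v_1-v_2\|_{\dot H^1}^2$, so $v_1-v_2$ is constant.

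For the conormal derivative, given $\varphi\in\dot H^{1/2}(\R^n)$ I would set $\langle\ell,\varphi\rangle:=\iint_{\R^{1+n}_+}A\nabla v\cdot\overline{\nabla\phi}\,dtdx$ for an arbitrary extension $\phi\in\dot H^1(\R^{1+n}_+)$ of $\varphi$. This is independent of the choice of $\phi$: the difference of two extensions has vanishing trace, hence lies in $\dot H^1_0$ and is therefore annihilated by the weak formulation $Lv=0$. Linearity in $\varphi$ is clear, and choosing $\phi$ with $\|\phi\|_{\dot H^1}\lesssim\|\varphi\|_{\dot H^{1/2}}$ gives $|\langle\ell,\varphi\rangle|\le\Lambda\|v\|_{\dot H^1}\|\phi\|_{\dot H^1}\lesssim\|v\|_{\dot H^1}\|\varphi\|_{\dot H^{1/2}}$, so $\ell\in\dot H^{-1/2}(\R^n)$; and by construction the stated identity holds for \emph{every} $\varphi$ and \emph{every} extension $\phi$, exactly as in the remark following Theorem~\ref{thm:Neu}.

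The step I expect to be the main obstacle, invoked twice above, is the characterization that an element of $\dot H^1(\R^{1+n}_+)$ with vanishing trace in $\dot H^{1/2}(\R^n)$ belongs to $\dot H^1_0(\R^{1+n}_+)$, i.e.\ is approximable in the $\dot H^1$ semi-norm by functions of $C^\infty_0(\R^{1+n}_+)$. This is the homogeneous counterpart of the classical description of $H^1_0$ and is proved in the spirit of Lemma~\ref{lem:density}: first zero-extend across $\{t<0\}$ (permissible precisely because the trace vanishes) and translate inward in $t$ to obtain functions vanishing near the boundary, with convergence in $\dot H^1$ by continuity of translations and control via the Hardy inequality; then truncate in $x$ as in Lemma~\ref{lem:density}; then mollify. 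The only delicate point is the bookkeeping with constants in the $x$-truncation, which is harmless here since the Hardy inequality makes $\|\cdot\|_{\dot H^1}$ a genuine norm on $\dot H^1_0$.
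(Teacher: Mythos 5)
Your proof is correct and follows the same route as the paper: reduce to zero boundary data by subtracting an extension of $f$, apply Lax--Milgram on $\dot H^1_0(\R^{1+n}_+)$ (with coercivity obtained slicewise from \eqref{eq:accrassumption} since $\nabla_{t,x}w(t,\cdot)\in\mH^0$ for a.e.\ $t$), set $v$ equal to the extension plus the corrector, and define $\ell$ through the sesquilinear form after observing that its value depends only on the trace of the test function. You additionally supply, via Hardy's inequality and the zero-extension/translation/truncation argument, the identification of the closure of $C^\infty_0(\R^{1+n}_+)$ in $\dot H^1$ with the set of zero-trace (mod constants) elements; the paper states this identification parenthetically without proof, and you are right that it is exactly what the uniqueness step and the well-definedness of $\ell$ rest on.
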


We say that $v$ is the \emph{energy solution for the regularity problem $Lv=0$  with data $\nabla_{x}f$} and we have $\pd_{\nu_{A}}v_{|t=0}=-\ell$.  

\begin{proof} Pick an extension $w \in \dot H^1(\R^{1+n}_+) $ of $f$. Let $\dot H^1_{0}(\R^{1+n}_+) $ be the subspace of $ \dot H^1(\R^{1+n}_+) $ consisting of all $u$ with constant trace on $\R^n$ (alternately this is the closure of $C^\infty_{0}(\R^{1+n}_+)$ in $\dot H^1(\R^{1+n}_+) $). By the Lax-Milgram theorem, there exists a unique $u \in \dot H^1_{0}(\R^{1+n}_+) $ solving $$\iint_{\R^{1+n}_+} A \nabla u \cdot \overline{\nabla \phi}\, dtdx=- \iint_{\R^{1+n}_+} A \nabla w \cdot \overline{\nabla \phi}\, dtdx$$  for all  $\phi \in \dot H^1_{0}(\R^{1+n}_+)$. Then $v=u+w$ is the solution. 

Next, the integral $\iint_{\R^{1+n}_+} A \nabla u \cdot \overline{\nabla \phi}\, dtdx$ depends only on the trace modulo constants of $\phi \in \dot H^1(\R^{1+n}_+)$. Thus, the map $\varphi \mapsto \iint_{\R^{1+n}_+} A \nabla u \cdot \overline{\nabla \phi}\, dtdx$ is bounded from $\dot H^{1/2}(\R^n)$ to $\C$ and this defines $\ell$. 
\end{proof}

Observe that $\nabla_{x} $ is injective with closed range from $\dot H^{1/2}(\R^n)$  into $\dot H^{-1/2}(\R^n; \C^n)$. We set $\dot \mH^{-1/2}_{\ta}$ the range of this map and for a reason that will become clear later  also set $\dot \mH^{-1/2}_{\no}= \dot H^{-1/2}(\R^n)$. By a Fourier transform argument, one sees that $\dot \mH^{-1/2}_{\ta}= \mR (\dot\mH^{-1/2}_{\no})$, where $\mR=\nabla (-\Delta)^{-1/2}$ is the array of Riesz transforms on $\R^n$ (the Hilbert transform if $n=1$) and $\Delta$ is the ordinary self-adjoint Laplace operator on $L^2(\R^n)$. With this notation, one  defines the Neumann to Dirichlet map 
\begin{equation}
\label{eq:ND}
\Gamma^A_{ND}\ell= \nabla_{x}u_{|t=0}, \quad \ell \in  \dot\mH^{-1/2}_{\no}
\end{equation}
where  $u$ is the energy solution of the Neumann problem for $Lu=0$ with Neumann data $-\ell$ and the Dirichlet to Neumann map 
\begin{equation}
\label{eq:DN}
\Gamma^A_{DN}g= \pd_{\nu_{A}}v_{|t=0}, \quad  g \in  \dot\mH^{-1/2}_{\ta},
\end{equation} 
where $v$ is the energy solution of $Lv=0$ of the regularity problem with data $g$.

\begin{thm}\label{thm:inverse}
$\Gamma^A_{ND}$ is a bounded and invertible map from $\dot \mH^{-1/2}_{\no}$ onto $\dot\mH^{-1/2}_{\ta}$ with inverse $\Gamma^A_{DN}$.
\end{thm}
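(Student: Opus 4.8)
The plan is to recognise both maps as compositions of the Lax--Milgram solution operator with a boundary trace, and to exploit that an energy solution of $Lu=0$ in $\R^{1+n}_+$ is determined, modulo constants, by its conormal derivative alone (Theorem~\ref{thm:Neu}) and equally by its Dirichlet trace alone (the theorem on the regularity problem above). Granting the boundedness claims, this already gives invertibility and identifies the inverse: given $\ell\in\dot\mH^{-1/2}_{\no}$, let $u$ be the energy solution of the Neumann problem with datum $-\ell$; since $u$ is a weak solution of $Lu=0$ with trace $u_{|t=0}\in\dot H^{1/2}(\R^n)$, the uniqueness half of the regularity theorem forces $u$ to agree, up to a constant, with the energy solution of the regularity problem with data $\nabla_x u_{|t=0}=\Gamma^A_{ND}\ell$; reading off its conormal derivative gives $\Gamma^A_{DN}(\Gamma^A_{ND}\ell)=\pd_{\nu_{A}}u_{|t=0}$, which is $-\ell$ in the sign convention of \eqref{eq:ND}--\eqref{eq:DN}. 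The reverse composition is symmetric: starting from $g=\nabla_x f\in\dot\mH^{-1/2}_{\ta}$, the regularity solution $v$ for $g$ has conormal derivative $-\ell$ with $\ell\in\dot H^{-1/2}(\R^n)=\dot\mH^{-1/2}_{\no}$, and by uniqueness in Theorem~\ref{thm:Neu} this same $v$ is the Neumann solution for the datum $-\ell$, so $\Gamma^A_{ND}\ell=\nabla_x v_{|t=0}=g$; in particular $\Gamma^A_{ND}$ is onto $\dot\mH^{-1/2}_{\ta}$.

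For the boundedness of $\Gamma^A_{ND}$ I would spell out the Lax--Milgram estimate behind Theorem~\ref{thm:Neu}: for $\phi\in\dot H^1(\R^{1+n}_+)$ the field $\nabla_{t,x}\phi(t,\cdot)$ lies in $\mH^0$ for a.e.\ $t$ (its tangential part is a gradient in $x$), so integrating \eqref{eq:accrassumption} in $t$ yields coercivity $\re\iint_{\R^{1+n}_+}A\nabla\phi\cdot\overline{\nabla\phi}\ge\lambda\|\phi\|_{\dot H^1}^2$, whence the solution obeys $\|u\|_{\dot H^1}\lesssim\|\ell\|_{\dot H^{-1/2}}$, using that the functional $\phi\mapsto\langle\ell,\varphi\rangle$ on $\dot H^1(\R^{1+n}_+)/\C$ has norm controlled by $\|\ell\|_{\dot H^{-1/2}}$ through the surjective trace of Lemma~\ref{lem:density}. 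Composing with that trace, bounded from $\dot H^1(\R^{1+n}_+)$ to $\dot H^{1/2}(\R^n)$, and with $\nabla_x$, which the Fourier-side identity $\|\nabla_x f\|_{\dot H^{-1/2}}=\|f\|_{\dot H^{1/2}}$ makes an isometry of $\dot H^{1/2}(\R^n)$ onto $\dot\mH^{-1/2}_{\ta}$, gives $\|\Gamma^A_{ND}\ell\|\lesssim\|\ell\|_{\dot H^{-1/2}}$.

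For $\Gamma^A_{DN}$, given $g=\nabla_x f$ with $\|f\|_{\dot H^{1/2}}=\|g\|_{\dot H^{-1/2}}$, I would choose an extension $w\in\dot H^1(\R^{1+n}_+)$ of $f$ with $\|w\|_{\dot H^1}\lesssim\|f\|_{\dot H^{1/2}}$ (Lemma~\ref{lem:density}), solve the Lax--Milgram problem for $u\in\dot H^1_{0}(\R^{1+n}_+)$ with right-hand side $-\iint A\nabla w\cdot\overline{\nabla\phi}$ to get $\|u\|_{\dot H^1}\lesssim\|w\|_{\dot H^1}$, hence $\|v\|_{\dot H^1}=\|u+w\|_{\dot H^1}\lesssim\|f\|_{\dot H^{1/2}}$, and then estimate the associated functional $\ell$ by testing against extensions of $\varphi\in\dot H^{1/2}(\R^n)$, getting $\|\ell\|_{\dot H^{-1/2}}\lesssim\|v\|_{\dot H^1}\lesssim\|g\|_{\dot H^{-1/2}}$, that is $\|\Gamma^A_{DN}g\|\lesssim\|g\|$.

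The one genuinely non-formal ingredient is the pair of existence/uniqueness theorems already at our disposal; beyond that the only points needing care are that the trace map of Lemma~\ref{lem:density} is a quotient map, so that testing the Lax--Milgram functional against traces loses nothing, and that all identifications of solutions are carried out consistently modulo constants and with consistent sign conventions for the conormal derivative. I do not expect a real obstacle here: the content of the statement is that the Neumann and regularity variational problems solve the same equation and are tied together by the very definition of $\pd_{\nu_{A}}$.
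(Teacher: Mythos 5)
Your proof is correct and is essentially the one the paper has in mind: the paper declares the theorem an obvious consequence of the Lax--Milgram existence/uniqueness results for the Neumann and regularity problems and simply records the identities $\Gamma^A_{ND}(\pd_{\nu_{A}}u_{|t=0})=\nabla_{x}u_{|t=0}$ and $\Gamma^A_{DN}(\nabla_{x}u_{|t=0})=\pd_{\nu_{A}}u_{|t=0}$ for energy solutions $u$, whereas you spell out the coercivity and trace estimates that make both maps bounded and use uniqueness of energy solutions to identify the two compositions. You also rightly track the sign: taking the phrase ``with Neumann data $-\ell$'' in \eqref{eq:ND} literally yields $\Gamma^A_{DN}\circ\Gamma^A_{ND}=-I$; the convention actually used in the paper's proof sketch and in Corollary~\ref{cor:spectral} is $\Gamma^A_{ND}(\pd_{\nu_A}u_{|t=0})=\nabla_x u_{|t=0}$, under which the two maps are genuine inverses as the theorem asserts.
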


The proof is an obvious consequence of the above results with our definitions. We have 
$\Gamma^A_{ND}(\pd_{\nu_{A}}u_{|t=0})= \nabla_{x}u_{|t=0}$ and $\Gamma^A_{DN}(\nabla_{x}u_{|t=0})=\pd_{\nu_{A}}u_{|t=0}$ for any energy solution $u$ in the upper half-space of $Lu=0$.

We finish this section with a standard result.

\begin{lem} \label{lem:uniqueness}
Let $u\in \dot H^1(\R^{1+n})$ be a solution of $Lu=0$ in $\R^{1+n}$. Then $u=0$ (modulo constants). 
\end{lem}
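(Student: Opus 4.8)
The plan is to test the equation against $u$ itself, using a family of cutoff functions to control boundary terms, and then invoke the accretivity hypothesis. First I would fix a smooth radial cutoff $\eta$ on $\R^{1+n}$ with $\eta = 1$ on the unit ball, $\supp\eta \subset B(0,2)$, $0\le\eta\le 1$, and set $\eta_R(t,x) = \eta(t/R, x/R)$, so that $\|\nabla_{t,x}\eta_R\|_\infty \lesssim 1/R$ and $\supp \nabla\eta_R \subset B(0,2R)\setminus B(0,R)$. Since $u \in \dot H^1(\R^{1+n})$ is a weak solution of $Lu=0$ on all of $\R^{1+n}$ (no boundary), the test function $\eta_R^2 \conj u$ — or more precisely $\eta_R^2(\conj u - \conj{c_R})$ with $c_R$ the average of $u$ over the annulus, to stay in $\dot H^1$ — is admissible, giving
\[
\iint_{\R^{1+n}} A\nabla u\cdot \conj{\nabla(\eta_R^2(u-c_R))}\, dtdx = 0.
\]

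Next I would expand the gradient by the product rule: $\nabla(\eta_R^2(u-c_R)) = \eta_R^2 \nabla u + 2\eta_R(u-c_R)\nabla\eta_R$, so that
\[
\iint \eta_R^2\, A\nabla u\cdot\conj{\nabla u}\, dtdx = -2\iint \eta_R (u-c_R)\, A\nabla u\cdot\conj{\nabla\eta_R}\, dtdx.
\]
The left-hand side, by the pointwise accretivity of $A$ (valid here since for fixed $(t,x)$ the full gradient is an arbitrary vector in $\C^{1+n}$ — note the curl-free restriction in \eqref{eq:accrassumption} is automatically satisfied by gradients), has real part bounded below by $\lambda \iint \eta_R^2 |\nabla u|^2$. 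For the right-hand side, Cauchy--Schwarz and $\|\nabla\eta_R\|_\infty\lesssim 1/R$ give a bound of the form
\[
\frac{C}{R}\left(\iint_{R\le|(t,x)|\le 2R}|u-c_R|^2\right)^{1/2}\left(\iint_{R\le|(t,x)|\le 2R}|\nabla u|^2\right)^{1/2}.
\]
By the Poincaré inequality on the annulus (which scales so that $\int_{\text{annulus}}|u-c_R|^2 \lesssim R^2 \int_{\text{annulus}}|\nabla u|^2$), this is $\lesssim \iint_{R\le|(t,x)|\le 2R}|\nabla u|^2$, which tends to $0$ as $R\to\infty$ because $\nabla u \in L^2(\R^{1+n})$. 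Hence $\lambda \iint_{B(0,R)}|\nabla u|^2 \to 0$, forcing $\nabla u = 0$, i.e. $u$ is constant.

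The main obstacle is the bookkeeping needed to make the test function genuinely lie in the admissible class: $\eta_R^2 u$ need not be in $\dot H^1$ if $u$ does not decay, which is why one subtracts the annular average $c_R$ and must check that $\eta_R^2(u-c_R) \in \dot H^1(\R^{1+n})$ with compact support — this follows from Poincaré exactly as in the proof of Lemma~\ref{lem:density}. One should also be slightly careful that the Poincaré constant on the dyadic annulus $\{R\le |(t,x)|\le 2R\}$ is scale-invariant (it is, by a standard rescaling argument), so that no spurious factor of $R$ survives. Everything else is routine: the key inputs are the pointwise accretivity inequality and the square-integrability of $\nabla u$ over the whole space, which together kill the boundary term in the limit.
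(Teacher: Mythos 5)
Your proof is correct, but it takes a different route from the paper's. The paper's argument is much shorter: by Lemma~\ref{lem:density}, $C_0^\infty(\R^{1+n})$ is dense in $\dot H^1(\R^{1+n})$, so the weak-solution identity $\iint A\nabla u\cdot\overline{\nabla\phi}=0$ extends from $\phi\in C_0^\infty$ to all $\phi\in\dot H^1(\R^{1+n})$; one then simply sets $\phi=u$ and invokes accretivity. You instead run a Caccioppoli-type cutoff argument directly on $\R^{1+n}$, subtracting annular averages to stay in the admissible class and killing the error term via the scale-invariant Poincar\'e inequality on dyadic annuli together with $\nabla u\in L^2$. The two arguments are in fact close cousins: your cutoff-plus-Poincar\'e bookkeeping is essentially what the proof of Lemma~\ref{lem:density} does (via Mazur's lemma) to produce an approximating sequence with compact support, so you are implicitly reproving that lemma rather than citing it. What your version buys is self-containedness — it never needs the density statement or Mazur's lemma — and it makes the decay of the annular error $\iint_{R\le|(t,x)|\le 2R}|\nabla u|^2\to 0$ explicit. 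What the paper's version buys is brevity, since the density lemma was already established for other purposes. One small point worth noting: your appeal to pointwise accretivity is valid because the paper sets $m=1$ (where the two accretivity formulations coincide); for $m>1$ one would instead apply the $\mH^0$-form of \eqref{eq:accrassumption} slice-by-slice in $t$ via Fubini, since $\nabla_x u(t,\cdot)$ is curl-free for each fixed $t$.
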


\begin{proof} By definition, we have 
$$\iint_{\R^{1+n}} A \nabla u \cdot \overline{\nabla \phi}\, dtdx = 0$$
for all $\phi \in C_{0}^\infty(\R^{1+n})$, hence  for all $\phi\in \dot H^1(\R^{1+n})$ by density as in Lemma \ref{lem:density}. We conclude taking $\phi=u$ and using the accretivity of $A$.
\end{proof}

\section{The first order formalism}

Following \cite{AAM} and \cite{AA1}, we can characterize weak solutions  $u$ to the divergence form equation (\ref{eq:divform}), 
by replacing $u$ by its conormal gradient $\nabla_{A}u$ as the unknown function.
More precisely (\ref{eq:divform}) for $u$ is replaced by  \eqref{eq:firstorderODE} for 
$$
  F(t,x) =\nabla_A u(t,x)=  \begin{bmatrix} \pd_{\nu_A}u(t,x)\\ \nabla_x u(t,x) \end{bmatrix},$$ 
where $\pd_{\nu_A}u:= (A\nabla_{t,x}u)_\no$, that is the first component of $A\nabla_{t,x}u$. This is the inward  conormal derivative of $u$ for the upper half-space and the outward conormal derivative for the lower half-space.
Here we use the notation $v= \begin{bmatrix} v_{\no}\\ v_{\ta}\end{bmatrix}$ for vectors  in $\C^{1+n}$, where $v_{\no}\in \C$ is called the scalar part and $v_{\ta}\in \C^n$ the tangential part of $v$. For example, $\pd_{t}u=(\nabla_{t,x }u)_{\no}$ and $\nabla_{x} u = (\nabla_{t,x }u)_{\ta}$.

\begin{prop}  \label{prop:divformasODE}
  The pointwise transformation 
   \begin{equation}
\label{eq:hat}
A\mapsto \hat A:=  \begin{bmatrix} 1 & 0  \\ 
    c & d \end{bmatrix}\begin{bmatrix} a & b  \\ 
    0 & 1 \end{bmatrix}^{-1}=  \begin{bmatrix} a^{-1} & -a^{-1} b  \\ 
    ca^{-1} & d-ca^{-1}b \end{bmatrix}
\end{equation}
is a self-inverse bijective transformation of the set of  matrices in $\mA$.

For a pair of coefficient matrices $A= \hat B$ and $B= \hat A$, 
the pointwise map $\nabla_{t,x}u\mapsto F= \nabla_{A}u$ gives 
a one-one correspondence, with inverse $F \mapsto \nabla_{t,x}u= \begin{bmatrix} (B F)_\no \\ F_\ta  \end{bmatrix}$,
between   gradients of weak solutions $u\in  H^1_{\loc}(\R^{1+n}_{+})$ to (\ref{eq:divform})
and   solutions $F\in L^2_{\loc}(\R^{1+n}_{+}; \C^{1+n})$  of the generalized Cauchy--Riemann equations
\begin{equation}  \label{eq:firstorderODE}
  \pd_t F+  \begin{bmatrix} 0 & \divv_{x} \\ 
     -\nabla_{x} & 0 \end{bmatrix} B F=0, \quad  \curl_{x}F_{\ta}=0,
\end{equation}
where the derivatives are taken in the $\R^{1+n}_+$ distributional sense.\end{prop}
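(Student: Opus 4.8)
The plan is to verify the correspondence by a direct algebraic computation converting between the two first-order systems, keeping careful track of what "weak solution" means in each formulation. First I would fix the notation: with $A = \hat B$ we have, by the self-inverse property of the hat transformation established in the first part, $B = \hat A$, and the defining identity
\begin{equation*}
\begin{bmatrix} 1 & 0 \\ c_B & d_B \end{bmatrix} = \hat A \begin{bmatrix} a_B & b_B \\ 0 & 1\end{bmatrix}
\end{equation*}
where I write $B$ in block form with entries $a_B, b_B, c_B, d_B$; the point of the hat transform is exactly that $\hat A$ sends the vector $\nabla_{t,x}u = \begin{bmatrix} \pd_t u \\ \nabla_x u\end{bmatrix}$ to $A\nabla_{t,x}u$ composed with the swap of the normal component, i.e. that $F = \nabla_A u = \begin{bmatrix} (A\nabla_{t,x}u)_\no \\ (\nabla_{t,x}u)_\ta \end{bmatrix} = \begin{bmatrix} \pd_{\nu_A}u \\ \nabla_x u\end{bmatrix}$ if and only if $\nabla_{t,x}u = \begin{bmatrix}(BF)_\no \\ F_\ta\end{bmatrix}$. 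This last equivalence is a pointwise linear-algebra fact: given the $2\times 2$ block structure, the map $\begin{bmatrix} v_\no \\ v_\ta\end{bmatrix} \mapsto \begin{bmatrix}(Av)_\no \\ v_\ta\end{bmatrix}$ is invertible on $\C^{1+n}$ with inverse $\begin{bmatrix} w_\no \\ w_\ta\end{bmatrix}\mapsto \begin{bmatrix}(\hat A\, w)_\no \\ w_\ta\end{bmatrix}$, because $a$ is invertible (a consequence of pointwise accretivity, which forces $\re a \ge \lambda$) and one just reads off the formula for $\hat A$ displayed in the proposition. This sets up the bijection at the level of pointwise vector fields.

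Next I would translate the PDE. Starting from a weak solution $u \in H^1_{\loc}$ of \eqref{eq:divform}, set $F = \nabla_A u$. The tangential constraint $\curl_x F_\ta = 0$ is immediate since $F_\ta = \nabla_x u$ is a gradient. For the evolution equation, note that \eqref{eq:divform} written in $(t,x)$ variables says precisely $\pd_t\big((A\nabla_{t,x}u)_\no\big) + \divv_x\big((A\nabla_{t,x}u)_\ta\big) = 0$ in $\mD'(\R^{1+n}_+)$; that is, $\pd_t F_\no + \divv_x\big((A\nabla_{t,x}u)_\ta\big) = 0$. Now I use the relation $\nabla_{t,x}u = \begin{bmatrix}(BF)_\no \\ F_\ta\end{bmatrix}$ derived above to express $A\nabla_{t,x}u$ in terms of $F$: its normal component is $F_\no$ by construction, and its tangential component works out — again by the block formula and $B = \hat A$ — to $(BF)_\ta$. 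Hence the equation becomes $\pd_t F_\no + \divv_x (BF)_\ta = 0$. Combined with $\pd_t F_\ta = \pd_t \nabla_x u = \nabla_x \pd_t u = \nabla_x (BF)_\no$, these are exactly the two rows of \eqref{eq:firstorderODE}. Conversely, given $F \in L^2_{\loc}$ solving \eqref{eq:firstorderODE}, the condition $\curl_x F_\ta = 0$ lets one (locally, using the Poincaré lemma on balls) recover a function $u$ with $\nabla_x u = F_\ta$, determined up to a function of $t$; the first row of \eqref{eq:firstorderODE} in the $\ta$-component, $\pd_t F_\ta = \nabla_x(BF)_\no$, shows $\nabla_x(\pd_t u - (BF)_\no) = 0$, so one may adjust the $t$-dependent constant so that $\pd_t u = (BF)_\no$, giving $\nabla_{t,x}u = \begin{bmatrix}(BF)_\no \\ F_\ta\end{bmatrix}$ and hence $u \in H^1_{\loc}$; then running the computation backwards shows $u$ is a weak solution of \eqref{eq:divform} and $\nabla_A u = F$.

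The main obstacle I expect is not the algebra but the bookkeeping around regularity and the meaning of the equations in the distributional sense: one must check that $F \in L^2_{\loc}$ with $\curl_x F_\ta = 0$ genuinely produces $u \in H^1_{\loc}$ (the local potential $u$ from the Poincaré lemma has $\nabla_x u = F_\ta \in L^2_{\loc}$, and the equation supplies $\pd_t u = (BF)_\no \in L^2_{\loc}$, so this is fine, but it needs to be said), and that the manipulations turning \eqref{eq:divform} into \eqref{eq:firstorderODE} are legitimate as identities in $\mD'(\R^{1+n}_+)$ — which they are, since all the coefficient matrices are in $L_\infty$ and act pointwise, so multiplication by $A$, $B$, $\hat A$ commutes with distributional differentiation in the required way. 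A secondary point worth stating carefully is that the correspondence $u \leftrightarrow F$ is well-defined modulo constants (the ambiguity in recovering $u$), matching the "$u \in H^1_{\loc}$" being understood up to additive constants, and that it is genuinely inverse to $F \mapsto \nabla_{t,x}u$ as claimed. With these points addressed the proposition follows.
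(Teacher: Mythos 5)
The paper does not itself prove this proposition; immediately after the statement it says the transformation was introduced in \cite{AAM} and ``the proposition is proved in this generality in \cite{AA1}.'' So there is no in-text argument to compare against. Evaluated on its own merits, your direct-computation approach is the standard one and the algebra is correct: the identity $F=\begin{bmatrix} a & b\\ 0 & 1\end{bmatrix}\nabla_{t,x}u$ together with $A\begin{bmatrix} a & b\\ 0 & 1\end{bmatrix}^{-1}=\begin{bmatrix} 1 & 0\\ (\hat A)_{\ta\no} & (\hat A)_{\ta\ta}\end{bmatrix}$ gives both $\nabla_{t,x}u=\begin{bmatrix} (BF)_\no\\ F_\ta\end{bmatrix}$ and $(A\nabla_{t,x}u)_\ta=(BF)_\ta$, and then \eqref{eq:divform} becomes the $\no$-row of \eqref{eq:firstorderODE} while the $\ta$-row is just $\pd_t\nabla_x u=\nabla_x\pd_t u$.

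Two points deserve more care. First, the proposition also asserts that $A\mapsto\hat A$ maps $\mA$ \emph{into} $\mA$; you invoke invertibility of $a$ but never check that $\hat A$ is strictly accretive. This does need an argument: writing $v=\begin{bmatrix} a & b\\ 0 & 1\end{bmatrix}^{-1}F$ one has $\re\langle\hat A F,F\rangle=\re\langle Av,v\rangle\ge\lambda\|v\|^2\gtrsim\|F\|^2$ because $\begin{bmatrix} a & b\\ 0 & 1\end{bmatrix}$ and its inverse are bounded; without something like this, the ``bijective transformation of $\mA$'' claim is unproved. Second, in the converse direction the slice-by-slice construction (``recover $u$ with $\nabla_x u=F_\ta$, determined up to a function of $t$, then adjust the $t$-dependent constant so that $\pd_t u=(BF)_\no$'') glosses over a genuine issue: an arbitrary choice of constants per slice need not produce a distribution in $(t,x)$, let alone one with a distributional $t$-derivative, so the quantity $\pd_t u-(BF)_\no$ you wish to annihilate is not yet defined. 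The clean fix is to observe that the two conditions $\curl_x F_\ta=0$ and $\pd_t F_\ta-\nabla_x(BF)_\no=0$ together say precisely that the $L^2_{\loc}$ vector field $G=\begin{bmatrix}(BF)_\no\\ F_\ta\end{bmatrix}$ is curl-free in all $1+n$ variables on the simply connected set $\R^{1+n}_+$; the $(1+n)$-dimensional Poincar\'e lemma then yields $u\in H^1_{\loc}(\R^{1+n}_+)$ with $\nabla_{t,x}u=G$, unique modulo constants, and the remaining $\no$-row of \eqref{eq:firstorderODE} is exactly \eqref{eq:divform}. With these two repairs the proof is complete.
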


This transformation was introduced in \cite{AAM} and the proposition is proved in this generality in \cite{AA1}.  We shortly review the $L^2$ theory in \cite{AA1}.  
Denote by  $\dirac $  the self-adjoint operator on $\mH= L^2(\R^n; \C^{1+n})$ defined by $$
  \dirac:= 
    \begin{bmatrix} 0 & \divv_{x} \\ 
     -\nabla_{x} & 0 \end{bmatrix},\qquad  \dom(\dirac) = \begin{bmatrix}  \dom(\nabla)\\ \dom (\divv)\end{bmatrix}. 
$$
 The closure of the range of $\dirac$ is the set of $F\in \mH$ such that $\curl_{x}F_{\ta}=0$, that is $\clos{\ran(\dirac)}=\mH^0$.  It is shown in \cite{AKMc} that the operators $\dirac B$ and  $B\dirac $ with respective domains $B^{-1}\dom(\dirac)$  and $\dom(\dirac)$ and ranges $\ran(\dirac)$ and $B\ran(\dirac)$ are bi-sectorial operators with bounded holomorphic functional calculi on the closure of their range $\mH^0$ and $B^{-1}\mH^0$. Observe the similarity relation
 \begin{equation}
\label{eq:similar}
B(\dirac B)= (B\dirac) B \quad \mathrm{on}\ \dom(\dirac B)
\end{equation}
that allows to transfer functional properties between $\dirac B$ and $B\dirac$. 
In particular, if $\sgn(z)=1$ for $\re z>0$ and $-1$ for $\re z<0$, the operators $\sgn(\dirac B)$ and $\sgn(B\dirac)$ are well-defined bounded involutions on $\mH^0$ and $B^{-1}\mH^0$ respectively. 
 One defines the spectral spaces $\mH^{0, \pm}_{\dirac B}= \nul(\sgn(\dirac B)\mp I)$ and 
 $\mH^{0, \pm}_{B\dirac }= \nul(\sgn(B\dirac )\mp I)$. They topologically split $\mH^0$ and $B^{-1}\mH^0$ respectively. The restriction of $\dirac B$  to the invariant space $ \mH^{0, +}_{\dirac B}$ is sectorial of type less than $\pi/2$, hence it generates an analytic semi-group $e^{-t\dirac B}$, $t\ge 0$, on it. Similarly, the restriction of $B\dirac $  to the invariant space $ \mH^{0, +}_{B\dirac }$ is sectorial of type less than $\pi/2$, hence it generates an  analytic semi-group $e^{-tB\dirac }$, $t\ge 0$, on $ \mH^{0, +}_{B\dirac }$.

 \begin{thm} \label{thm:AAM-AA1} Let $u\in H^1_{loc}(\R^{1+n}_{+})$. Then,
 \begin{enumerate}
  \item  $u$ is  a weak solution of $Lu=0$ with 
  $ \|\wt N(\nabla u)\|_{2}<\infty$  if and only if there exists $F_{0} \in \mH^{0,+}_{\dirac B}$ such that  
$\nabla_{A}u=e^{-t \dirac B} F_{0}$.  Moreover, $F_{0}$ is unique  and $\|F_{0}\|_{2}\approx 
 \|\wt N(\nabla u)\|_{2}$.
  \item   $u$ is a weak solution of $Lu=0$ with $\iint_{\R^{1+n}_{+}}t|\nabla_{t,x}u|^2 dt dx<\infty$ if and only if there exists $\wt F_{0} \in \mH^{0,+}_{B\dirac }$  such that  $\nabla_{A}u=\dirac e^{-t B \dirac}\wt F_{0}$. Moreover, $\wt F_{0}$ is unique,  $\|\wt F_{0}\|_{2}\approx (\iint_{\R^{1+n}_{+}}t|\nabla_{t,x}u|^2 dt dx)^{1/2}$ and $u$ is given by $u=-(e^{-t B \dirac}\wt F_{0})_{\no}+c$ for some constant $c\in \C$. 
   
\end{enumerate} 
 
 \end{thm}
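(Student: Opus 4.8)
The plan is to reduce everything to the first order reformulation of Proposition~\ref{prop:divformasODE} and then feed in the $L^2$ functional calculus of $\dirac B$ and $B\dirac$ recalled above (from \cite{AKMc}) together with the associated semigroup and quadratic estimates. By Proposition~\ref{prop:divformasODE} with $B=\hA$, a weak solution $u$ of $Lu=0$ in $\R^{1+n}_+$ is the same object as an $L^2_\loc(\R^{1+n}_+;\C^{1+n})$ solution $F=\nabla_A u$ of $\pd_t F+\dirac BF=0$ with $\curl_x F_\ta=0$, $u$ being recovered modulo constants from $\nabla_{t,x}u=\begin{bmatrix}(BF)_\no\\ F_\ta\end{bmatrix}$. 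Since $A$ and $\hA$ are bounded and strictly accretive, the pointwise transformation between $\nabla_{t,x}u$ and $F$ is bounded with bounded inverse, so $|\nabla_{t,x}u|\approx|F|$ and both the modified non-tangential bound and the $t\,dt\,dx$ square function condition on $\nabla u$ transfer to $F$. I would first upgrade the first order equation: interior ($H^1_\loc$) estimates for $L$ show that for each $t>0$ one has $F(t,\cdot)\in\mH^0=\clos{\ran\dirac}$, that $t\mapsto F(t,\cdot)$ is smooth from $(0,\infty)$ into $\mH^0$, and that it solves $F'(t)=-\dirac BF(t)$ strongly there; conversely any such $\mH^0$-valued solution of the ODE comes from a weak solution of \eqref{eq:divform}.

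For part (1), fix $0<a<b<\infty$. Since $\dirac B$ is bisectorial with bounded holomorphic functional calculus on $\mH^0$, the splitting $\mH^0=\mH^{0,+}_{\dirac B}\oplus\mH^{0,-}_{\dirac B}$ is preserved by the flow, the $+$-component evolving by the forward analytic semigroup $e^{-t\dirac B}$ and the $-$-component by the backward one, so on $[a,b]$ one gets $F(t)=e^{-(t-a)\dirac B}\chi^{+}(\dirac B)F(a)+e^{(b-t)\dirac B}\chi^{-}(\dirac B)F(b)$. The hypothesis $\|\wt N(\nabla u)\|_2<\infty$ gives uniform control of the local $L^2$ averages of $F$ over Whitney boxes at every scale; letting $b\to\infty$, the backward ($\mH^{0,-}_{\dirac B}$) component cannot be reconciled with this control and must vanish, so $F(t)=e^{-(t-a)\dirac B}F(a)$ for $t\ge a$. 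Letting $a\to0$ and using a weak-$L^2$ compactness/Fatou argument on the Whitney averages extracts a unique $F_0\in\mH^{0,+}_{\dirac B}$ with $F(a)\to F_0$ in $\mH^0$ and $\nabla_A u=e^{-t\dirac B}F_0$. The equivalence $\|F_0\|_2\approx\|\wt N(\nabla u)\|_2$ is then the pair of inequalities $\|\wt N(e^{-t\dirac B}F_0)\|_2\lesssim\|F_0\|_2$ — a maximal estimate for the holomorphic semigroup proved from $L^2$ off-diagonal (Gaffney) bounds together with the functional calculus, as in \cite{AAM,AA1} — and $\|F_0\|_2\lesssim\liminf_{a\to0}\|F(a)\|_2\lesssim\|\wt N(\nabla u)\|_2$.

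For part (2), the side condition is $\iint_{\R^{1+n}_+}t|\nabla_{t,x}u|^2\,dt\,dx<\infty$, i.e. $\iint_{\R^{1+n}_+}t|F(t,x)|^2\,dt\,dx<\infty$. Running the same spectral splitting, finiteness of this square function again annihilates the backward component; conjugating $\dirac B$ to $B\dirac$ through the similarity \eqref{eq:similar}, the forward part is written $F(t)=\dirac e^{-tB\dirac}\wt F_0$ with $\wt F_0\in\mH^{0,+}_{B\dirac}$. Indeed $tBF(t)=tB\dirac e^{-tB\dirac}\wt F_0=\psi(tB\dirac)\wt F_0$ with $\psi(z)=ze^{-z}$, and the quadratic estimate $\int_0^\infty\|\psi(tB\dirac)\wt F_0\|_2^2\,\tfrac{dt}{t}\approx\|\wt F_0\|_2^2$ (a consequence of the bounded holomorphic functional calculus on $\mH^{0,+}_{B\dirac}$), combined with $\|BG\|_2\approx\|G\|_2$ on $\mH^0$ by accretivity of $B$, simultaneously shows that $\wt F_0$ lies in the right space and that $\iint t|F|^2\,dt\,dx\approx\|\wt F_0\|_2^2$. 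To recover $u$, note that $(\dirac G)_\ta=-\nabla_x G_\no$ and $\pd_t e^{-tB\dirac}=-B\dirac e^{-tB\dirac}$, so setting $u:=-(e^{-tB\dirac}\wt F_0)_\no+c$ gives $\nabla_x u=(\dirac e^{-tB\dirac}\wt F_0)_\ta=F_\ta$ and $\pd_t u=(B\dirac e^{-tB\dirac}\wt F_0)_\no=(BF)_\no$, that is $\nabla_{t,x}u=\begin{bmatrix}(BF)_\no\\ F_\ta\end{bmatrix}$; by Proposition~\ref{prop:divformasODE} this $u$ is a weak solution of $Lu=0$ with $\nabla_A u=F$, and uniqueness of $\wt F_0$ is inherited from injectivity on $\mH^{0,+}_{B\dirac}$.

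The main obstacle is not this bookkeeping but the analytic inputs. The heart of the matter — the boundedness of the $H^\infty$ functional calculus, equivalently the quadratic (Kato square root type) estimates, for $\dirac B$ and $B\dirac$ on $\mH^0$ and $B^{-1}\mH^0$ — is quoted from \cite{AKMc}. Granting that, the remaining delicate points are the maximal estimate $\|\wt N(e^{-t\dirac B}F_0)\|_2\lesssim\|F_0\|_2$ (requiring Moser/interior local boundedness together with the off-diagonal bounds for the resolvents of $\dirac B$), and the limiting arguments: the Fatou-type extraction of the $L^2$ boundary trace $F_0$ as $a\to0$, and the vanishing of the wrong ($\mH^{0,-}$) spectral component as $b\to\infty$, both of which must be carried out carefully while working modulo constants in the potential $u$.
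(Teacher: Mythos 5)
The statement you are proving is not proved in the paper at all: the authors explicitly defer to the references, writing that ``the if part was obtained in \cite{AAM} and the only if part in \cite[Theorems 8.2 and 9.2]{AA1}.'' Your sketch is a faithful reconstruction of the strategy of those references --- pass to the first--order ODE $\pd_t F+\dirac BF=0$ via Proposition~\ref{prop:divformasODE}, split along $\mH^{0,\pm}_{\dirac B}$ using the functional calculus of \cite{AKMc}, evolve the $+$ part forward and the $-$ part backward, rule out the $-$ part, extract the trace $F_0$, and close with quadratic/non-tangential maximal estimates --- so in that sense you are taking the only approach the paper implicitly endorses.

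That said, the one step you state as if it were immediate is precisely the one that is not: ``letting $b\to\infty$, the backward ($\mH^{0,-}_{\dirac B}$) component cannot be reconciled with this control and must vanish.'' As written this is not an argument. From the ODE you get $\chi^-(\dirac B)F(a)=e^{(b-a)\dirac B}\chi^-(\dirac B)F(b)$ for all $b>a$; the backward semigroup $e^{s\dirac B}|_{\mH^{0,-}}$, $s\ge 0$, is merely bounded analytic, so a uniform $L^2$ bound $\sup_b\|F(b)\|_2<\infty$ (which is all the Whitney--averaged non--tangential bound gives you) does \emph{not} force the right--hand side to tend to $0$. For part~(2) the square function bound $\int_0^\infty t\|F(t)\|_2^2\,dt<\infty$ does give $\liminf_{b\to\infty}\|F(b)\|_2=0$ along a sequence, which closes the gap; for part~(1) one must argue differently (this is where \cite[Section~8]{AA1} does real work, using finer decay of the semigroup on a dense set and a separate treatment of the boundary limit). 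Similarly, the strong $L^2$ convergence $F(a)\to F_0$ as $a\to0$ and the maximal estimate $\|\wt N(e^{-t\dirac B}F_0)\|_2\lesssim\|F_0\|_2$ are the analytic substance of \cite{AAM,AA1} and are only named, not derived, in your sketch. Finally, a minor point: on the bisector you should use $\psi(z)=ze^{-z\,\sgn z}$ (or restrict to the sectorial part $\mH^{0,+}_{B\dirac}$ before invoking $\psi(z)=ze^{-z}$), since $ze^{-z}$ is unbounded on the left sector. In short, your outline matches the intended proof, but the hard parts remain quoted rather than proved --- which is exactly the status of this theorem in the paper itself.
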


The if part was obtained in \cite{AAM} and the only if part in \cite[Theorems 8.2 and 9.2]{AA1}.  
Here $ \wt N(g)$ is the  Kenig-Pipher modified non-tangential function $ \|\wt N(\nabla u)\|_{2}$, 
where  $$
  \wt N(g)(x):= \sup_{t>0}  t^{-(1+n)/2} \|f\|_{L_2(W(t,x))}, \qquad x\in \R^n,
$$
with $W(t,x):= (c_0^{-1}t,c_0t)\times B(x;c_1t)$, for some fixed constants $c_0>1$, $c_1>0$. 

\begin{rem} Although we do not need that, the same proof  shows when coefficients are $t$-independent  that for the equivalence of (1) to hold one could replace $ \|\wt N(\nabla u)\|_{2}$ by 
the weaker condition $\sup_{t>0} (\frac 1 t \int_{t}^{2t}\|\nabla_{s,x}u\|_{2}^2\,  ds)^{1/2}$ or  by  $\sup_{t>0 } \|\nabla_{t,x}u\|_{2}$ or even by  the square function $(\iint_{\R^{1+n}_{+}}t|\pd_{t}\nabla_{t,x}u|^2 dt dx)^{1/2} $, so that in the end all these quantities are \textit{a priori} equivalent for weak solutions.

\end{rem}

In (2), the function $\wt F_{0}$ is formally built as $\dirac\wt F_{0}=\nabla_{A}u_{|t=0}$  (which belongs to  an adapted  Sobolev space of order -1: we shall make this more precise).  

The spectral spaces with negative signs correspond to estimates for solutions to $Lu=0$ in the lower-half space, and the similar statement holds using the semigroups $e^{t\dirac B}$ and $e^{tB\dirac}$.  

Our aim is to extend this formalism to Sobolev spaces. However, a difficulty is that $\ran(B\dirac)$ is really an $L^2$ object as it depends on $B$. We shall modify the setup  to prepare  this extension. Recall that $\mH^0=\clos{\ran(\dirac)}$ is a closed subspace of $\mH=L^2(\R^n; \C^{1+n})$. 
Let $S= \dirac_{|\mH^0}$ with domain $\dom(\dirac)\cap \mH^0$. Then $S$ is a  one-one, self-adjoint operator. Define $\Pi$ as the orthogonal projection from $\mH$ onto $\mH^0$. It is the identity if $n=1$ but not otherwise. Let $\bet$ be the operator on $\mH^0$ defined by
$\bet u =\Pi Bu= \Pi B\Pi u, u\in \mH^0$.  As $B$ is a strictly accretive  operator on $\mH^0$ (for equations this is true on $\mH$ but only on $\mH^0$ for systems, that is,  when $m>1$), the restriction of   $\Pi$ on $B\mH^0$ is an isomorphism onto $ \mH^0$  and  $\bet$ is a strictly accretive operator on $\mH^0$. 

Define 
    $$
T: \mH^0 \to \mH^0, \quad T= \bet S= \Pi  B\dirac_{|\mH^0} \ \mathrm{with}\  \dom(T)=\dom(S)
$$
and 
$$
\uT: \mH^0 \to \mH^0, \quad \uT= S \bet = \dirac \Pi B_{|\mH^0}= \dirac B_{|\mH^0}\ \ \mathrm{with}\  \dom(\uT)=\bet^{-1}\dom(S).
$$

\begin{thm} \label{thm:TuTi}
 \begin{enumerate}
 
  \item  $\uT$ is a one-one bi-sectorial operator  with bounded holomorphic functional calculus on $\mH^0$. A function $u\in H^1_{loc}(\R^{1+n}_{+})$ is  a weak solution of $Lu=0$ with  $ \|\wt N(\nabla u)\|_{2}<\infty$  if and only if there exists $H_{0} \in \mH^{0, +}_{\uT}$ such that  
   $\nabla_{A}u=e^{-t\uT} H_{0}$.  Moreover, $H_{0}$ is unique  and $\|H_{0}\|_{2}\approx
     \|\wt N(\nabla u)\|_{2}$.
  \item  $T$ is a one-one bi-sectorial operator  with bounded holomorphic functional calculus on $\mH^0$. 
  A function $u\in H^1_{loc}(\R^{1+n}_{+})$ is a weak solution of $Lu=0$ with $\iint_{\R^{1+n}_{+}}t|\nabla_{t,x}u|^2 dt dx<\infty$ if and only if there exists $ \wt H_{0} \in \mH^{0, +}_{T}$ such that $\nabla_{A}u=S e^{-tT}\wt H_{0}$. Moreover, $\wt H_{0}$ is unique, $\|\wt H_{0}\|_{2}\sim (\iint_{\R^{1+n}_{+}}t|\nabla_{t,x}u|^2 dt dx)^{1/2}$ and $u=-(e^{-tT}\wt H_{0})_{\no}+c$ for some constant $c$. 
   
\end{enumerate} 
 
 \end{thm}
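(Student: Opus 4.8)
The plan is to deduce Theorem~\ref{thm:TuTi} from Theorem~\ref{thm:AAM-AA1} by transferring all assertions through the orthogonal projection $\Pi$ and the similarity relation $B(\dirac B)=(B\dirac)B$. The key observation is that $\uT=\dirac B_{|\mH^0}$ is literally the restriction of $\dirac B$ to $\mH^0=\clos{\ran(\dirac)}$, which is precisely the space on which $\dirac B$ has its bounded holomorphic functional calculus and generates the relevant semigroup; so part (1) requires almost nothing beyond unwinding definitions. For part (2) the operator $T=\Pi B\dirac_{|\mH^0}$ is $\Pi$-conjugate (in a suitable sense) to the part of $B\dirac$ on $B^{-1}\mH^0$, since $\Pi$ restricted to $B\mH^0$ is an isomorphism onto $\mH^0$ and $\bet=\Pi B_{|\mH^0}$ is strictly accretive on $\mH^0$.

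First I would record that $\uT$ and $T$ are one-one bisectorial with bounded holomorphic functional calculus. For $\uT$ this is immediate from \cite{AKMc} applied to $\dirac B$ together with the fact that $\dirac B$ is already one-one on $\mH^0$ (it is $\dirac$ followed by $B$, and $\dirac$ is one-one on $\mH^0$ while $B$ is strictly accretive there). For $T=\bet S$ with $S=\dirac_{|\mH^0}$ self-adjoint and one-one, I would invoke the similarity relation in the reduced setting: $\bet(S\bet)=(\bet S)\bet$ on an appropriate domain, so $T=\bet S$ and $\uT=S\bet$ are similar via the bounded invertible map $\bet$ on $\mH^0$, whence $T$ inherits bisectoriality and bounded holomorphic functional calculus from $\uT$ (equivalently, one quotes \cite{AKMc} for $B\dirac$ and pushes forward by $\Pi$). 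From bisectoriality with bounded functional calculus one gets the spectral decomposition $\mH^0=\mH^{0,+}_{\uT}\oplus\mH^{0,-}_{\uT}$ and likewise for $T$, together with the analytic semigroups $e^{-t\uT}$ on $\mH^{0,+}_{\uT}$ and $e^{-tT}$ on $\mH^{0,+}_{T}$.

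Next I would establish the correspondence between these new spectral data and those of Theorem~\ref{thm:AAM-AA1}. For (1): since $\uT=\dirac B$ acting on $\mH^0$, we have $\mH^{0,+}_{\uT}=\mH^{0,+}_{\dirac B}$ and $e^{-t\uT}=e^{-t\dirac B}$ verbatim, so the statement is exactly Theorem~\ref{thm:AAM-AA1}(1) restated. For (2): the map $\wt F_0\mapsto \wt H_0:=\Pi\wt F_0$ is a bijection from $\mH^{0,+}_{B\dirac}$ onto $\mH^{0,+}_T$ (using that $\Pi$ is an isomorphism from $B\mH^0\supset \mH^{0,+}_{B\dirac}$ onto $\mH^0$, and that it intertwines $B\dirac$ with $T$ in the sense $\Pi(B\dirac)=T\Pi$ on the relevant domain, so it maps spectral subspaces to spectral subspaces and $e^{-tB\dirac}$ to $e^{-tT}$); moreover $\|\wt H_0\|_2\approx\|\wt F_0\|_2$ by the isomorphism property. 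One then checks $\dirac e^{-tB\dirac}\wt F_0 = S e^{-tT}\wt H_0$: indeed $\dirac$ annihilates the orthogonal complement of $\mH^0$ so $\dirac=\dirac\Pi$, and $\dirac\Pi e^{-tB\dirac}=\dirac e^{-tB\dirac}$ while pushing the semigroup through $\Pi$ turns it into $e^{-tT}$ acting on $\wt H_0$, with $S=\dirac_{|\mH^0}$. The formula $u=-(e^{-tB\dirac}\wt F_0)_\no+c$ transfers to $u=-(e^{-tT}\wt H_0)_\no+c$ because the scalar (normal) component is unchanged under $\Pi$ when $n\ge 2$ — here one must note that $\Pi$ acts as the identity on the normal component and as the Helmholtz/Leray projection only on the tangential component, so $(e^{-tB\dirac}\wt F_0)_\no=(e^{-tT}\wt H_0)_\no$ — and trivially when $n=1$ since $\Pi=I$.

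The main obstacle is the bookkeeping around $\Pi$ in part (2): one must verify carefully that $\Pi$ genuinely intertwines $B\dirac$ on $B^{-1}\mH^0$ with $T=\Pi B\dirac_{|\mH^0}$ on $\mH^0$ — i.e.\ that $\Pi$ maps $\dom(B\dirac)\cap\mathcal H^{0,+}_{B\dirac}$ onto $\dom(T)\cap\mH^{0,+}_T$ bijectively and commutes with the holomorphic functional calculus (so that spectral projections and semigroups correspond), using only that $\Pi|_{B\mH^0}\colon B\mH^0\to\mH^0$ is an isomorphism and $\dirac\Pi=\dirac$. Once that intertwining is pinned down at the level of resolvents (from $(\lambda-T)^{-1}\Pi = \Pi(\lambda-B\dirac)^{-1}$ on $\mH^0$, which follows from $T\Pi = \Pi B\dirac$ on $\dom(B\dirac)$), everything else — functional calculus bounds, spectral splitting, semigroup identities, and the norm equivalences — follows formally, and the representation and uniqueness claims reduce directly to Theorem~\ref{thm:AAM-AA1}. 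The equality of normal components deserves an explicit half-line of justification but is not deep.
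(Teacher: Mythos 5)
Your proposal is correct and follows essentially the same route as the paper: part (1) by observing $\uT$ is literally $\dirac B$ restricted to $\mH^0$ so it reduces to Theorem~\ref{thm:AAM-AA1}(1), and part (2) by the $\bet$-similarity $T\bet=\bet\uT$ for the functional calculus plus the similarity $T=U(B\dirac)U^{-1}$ with $U=\Pi|_{B\mH^0}$ (which the paper exhibits via the identity $U^{-1}G-G\in\nul(\dirac)$), then transporting the semigroup and using that $U$ preserves the scalar component. The only cosmetic difference is that you phrase the intertwining at the resolvent level, whereas the paper verifies it directly on the domain; the substance is the same.
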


 \begin{proof} We begin with the first point. As  $\uT$ is  the restriction of $\dirac B$ to $\mH^0$, it is a one-one bi-sectorial operator with bounded holomorphic functional calculus. In particular, the spectral spaces $\mH^{0,\pm}_{\uT}$ coincide with $\mH^{0,\pm}_{\dirac B}$. The function $H_{0}$ is nothing but $F_{0}$ in Theorem \ref{thm:AAM-AA1}, (1).  This proves the first point. 
 
 Let us turn to the second point. That $T$ is one-one, bi-sectorial  with bounded holomorphic functional calculus on $\mH^0$  follows from the similarity relation $\bet\uT = T\bet$.
 To obtain the new equations for $u$ is not as direct. 
Denote by  $U: B\mH^0 \to \mH^0$  the restriction of $\Pi$  on $B\mH^0$. For $G\in \mH^0$, we have $\Pi(U^{-1} G-G)=0$ so $U^{-1}G \in \dom(\dirac) $ if and only if $G \in \dom (\dirac)$. A calculation shows that $TG= \Pi B \dirac G= UB\dirac U^{-1}G$. So $T$ is also similar to the restriction of $B\dirac $ on $B\mH^0$. 
Next,   the equation $\wt F = e^{-tB\dirac} \wt F_{0}$ with $\wt F_{0}\in \mH^{0, +}_{B\dirac}$   is equivalent  to $U \wt F= e^{-tT} U \wt F_{0}$ with $U \wt F_{0}\in \mH^{0, +}_{T}$. Hence,   $$Se^{-tT} U \wt F_{0}= \dirac U e^{-tB\dirac} \wt F_{0}=  \dirac e^{-tB\dirac} \wt F_{0}$$ and $$(e^{-tT}U \wt F_{0})_{\no}= (Ue^{-tB\dirac}\wt F_{0})_{\no}=(e^{-tB\dirac}\wt F_{0})_{\no} $$
as $U$ leaves the scalar component invariant. This concludes the proof with $\wt H_{0}=U\wt F_{0}= \Pi \wt F_{0}$, where $\wt F_{0}$ is the function specified in Theorem \ref{thm:AAM-AA1}, (2).
\end{proof}

The point is that both operators $T$ and $\uT$ act on the same space $\mH^0$ which is independent  of the action of $B$. Thus, we use the relation between quadratic estimates and interpolation developed in \cite[Section 8]{AMcN} for bi-sectorial operators and we strongly suggest the reader to have this reference handy from now on.

For $s\in \R$ define $\dot \mH^s$ as the completion of  $\mH^0$ for the quadratic norm
$$
\|F\|_{S,s}= \left\{ \int_{0}^\infty t^{-2s}\|\psi_{t}(S)F\|_{2}^2\, \frac{dt}{t}\right\}^{1/2} 
$$
where $\psi$ is a suitable holomorphic function on bi-sectors, for example $\psi(z)=z^k e^{-z\, \sgn(z)}$, $\re z\ne 0$ and $\N \ni k>\max (s,0)$. Remark that from the  spectral theorem $\dot \mH^0=\mH^0$ and  it can be checked that $\|F\|_{S,s}=c_{\psi, s}\||S|^sF\|_{2}$ where $|S| = (S^2)^{1/2}$. Note that $S$ extends to an isomorphism between $\dot \mH^s$ and $\dot \mH^{s-1}$. Classically, the intersection of $\dot \mH^s$ for $s$ in a bounded interval is dense in each of them. 

We define similarly for $\dot \mH^s_{T}$ and $\dot \mH^s_{\uT}$ replacing $S$ by $T$ and $\uT$.  The quadratic norms are equivalent under changes of suitable  $\psi$ that are non degenerate on both components of the bi-sectors. 

Note that $|S|$  preserves the normal and tangential components so we    can write $\dot \mH^s= \dot \mH^s_{\no} \oplus \dot \mH^s_{\ta}$ which agrees with our earlier notation when $s=0$ and $s=-1/2$. 
If $n=1$, we have $S=\dirac= \begin{bmatrix} 0 & d_{x} \\ 
     -d_{x} & 0 \end{bmatrix}$ so the quadratic norm   defines the 
     usual homogeneous Sobolev space $\dot H^{s}(\R; \C^{2})$ which is  also the completion of 
     $\dom( |S|^s)$ for the homogeneous norm $\||S|^sF\|_{2}$. It follows that $\dot \mH^s_{\no}= \dot \mH^s_{\ta}= \dot H^{s}(\R)$. 

For $ n\ge 2 $, recall that    $\mR=\nabla (-\Delta)^{-1/2}$ denotes the array of Riesz transforms  and let  $\mR^*=-  (-\Delta)^{-1/2}\divv$  be  its adjoint. 
The operator 
$$
V= \begin{bmatrix} I &0 \\ 
     0 & -\mR \end{bmatrix}: L^2(\R^n; \C^2) \to \mH^0
     $$
     is an isometry with inverse
     $$
    V^{-1}=V^*= \begin{bmatrix} I &0 \\ 
     0 & -\mR^* \end{bmatrix}:  \mH^0 \to L^2(\R^n; \C^2)
     $$
     and $VV^*=\Pi$.   A calculation using $\nabla= \mR (-\Delta)^{1/2}$ shows that 
     $$
     V^{-1}S V= \begin{bmatrix} 0 &(-\Delta)^{1/2} \\ 
     (-\Delta)^{1/2} & 0  \end{bmatrix}.
     $$
Thus, $\dot \mH^s_{\no} = \dot H^{s}(\R^n)$ and $\dot \mH^s_{\ta}=\mR \dot H^{s}(\R^n) $ where $\dot H^{s}(\R^n)$ is  the usual homogeneous Sobolev space with semi-norm $\|(-\Delta)^{s/2}f\|_{2}$.

\begin{prop} \label{prop:sobolev} 
\begin{enumerate}
  \item For $s\in \R$, for all bounded holomorphic functions $b$ in appropriate bi-sectors,  $b(T)$ extends to a bounded operator on $\dot \mH^s_{T}$.   In particular, this holds for $\sgn(T)$ which is a bounded self-inverse operator on $\dot\mH^s_{T}$, and $T$ and $|T|= \sgn(T) T$ extend to isomorphisms between $\dot \mH^s_{T}$ and $\dot \mH^{s-1}_{T}$.  The operator $|T|$ extends to a sectorial operator on $\dot \mH^s_{T}$.
\item   $\dot \mH^s$ topologically splits as the sum of the two spectral closed subspaces  $\dot\mH^{s,+}_{T}=\nul(\sgn (T)-I)$ and $\dot\mH^{s,-}_{T}=\nul(\sgn(T) +I)$. 
  \item The same two items hold with $T$ replaced by  $\uT$.
  \item For $0\le s\le 1$, $\dot \mH^s_{T}= \dot \mH^{s}$  and for $-1\le  s\le 0$, 
$\dot \mH^s_{\uT}= \dot \mH^{s}$ with equivalence of norms. 
  \item  Furthermore, for $-1\le s<0$, we have for  
  $\| F\|_{S,s} \approx \left\{ \int_{0}^\infty t^{-2s}\|e^{-t|\uT|}F\|_{2}^2\, \frac{dt}{t}\right\}^{1/2}$.
  \end{enumerate}

\end{prop}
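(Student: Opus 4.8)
The plan is to exploit that $S$, $T$ and $\uT$ all act on the \emph{same} Hilbert space $\mH^0$, on which each carries a bounded holomorphic functional calculus (for $S$ by the spectral theorem, for $T$ and $\uT$ by Theorem~\ref{thm:TuTi}), and then to feed this into the machinery of \cite[Section~8]{AMcN}: for a bisectorial operator with bounded $H^\infty$ calculus the quadratic-norm spaces $\dot\mH^s_{(\cdot)}$ form a complex interpolation scale, are stable under the functional calculus, and satisfy $\|F\|_{(\cdot),s}\approx\big\|\,|\cdot|^sF\,\big\|_2$. This interpolation input is the only genuinely non-trivial ingredient, and I flag it now as the main obstacle: since $T$ and $\uT$ are \emph{not} self-adjoint, nothing may be read off a spectral theorem and every manipulation of $|T|^s$, $\sgn(T)$, resolvents and Littlewood--Paley functions must go through the functional calculus. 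Throughout, $\mathcal D$ denotes a subspace, e.g. $\bigcap_{|\sigma|\le 2}\dot\mH^\sigma_{T}$, dense in each $\dot\mH^s_{T}$ for $|s|\le 2$ (and likewise for $\uT$), on which all the operators below are literally defined.

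\emph{Items (1)--(3).} If $b$ is bounded holomorphic on a bi-sector about the spectrum of $T$, multiplicativity gives $\psi_t(T)b(T)=b(T)\psi_t(T)$, so for $F\in\mathcal D$
$$\|b(T)F\|_{T,s}^2=\int_0^\infty t^{-2s}\|b(T)\psi_t(T)F\|_2^2\,\frac{dt}{t}\le\|b(T)\|_{\mathcal L(\mH^0)}^2\|F\|_{T,s}^2,$$
whence $b(T)$ extends to $\dot\mH^s_{T}$ with $\|b(T)\|_{\mathcal L(\dot\mH^s_{T})}\le\|b(T)\|_{\mathcal L(\mH^0)}\lesssim\|b\|_\infty$. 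Taking $b=\sgn$ yields a bounded involution $\sgn(T)$ on $\dot\mH^s_{T}$, hence bounded complementary projections $\tfrac12(I\pm\sgn(T))$ and the topological splitting $\dot\mH^s_{T}=\dot\mH^{s,+}_{T}\oplus\dot\mH^{s,-}_{T}$, which is (2). Since $\psi_t(T)TF=t^{-1}(z\psi)_t(T)F$ with $z\psi$ still admissible, $T$ — and then $|T|=\sgn(T)T$ — extends to an isomorphism $\dot\mH^s_{T}\to\dot\mH^{s-1}_{T}$. Finally, for $\lambda$ off a slightly wider bi-sector, $(\lambda+|T|)^{-1}=r_\lambda(T)$ with $r_\lambda$ bounded holomorphic and $\|r_\lambda(T)\|_{\mathcal L(\mH^0)}\lesssim|\lambda|^{-1}$; by the displayed estimate this persists on $\dot\mH^s_{T}$, so $|T|$ is sectorial there. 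Item (3) is identical, with $\uT$ replacing $T$ (using that $\uT$ too has a bounded holomorphic functional calculus on $\mH^0$).

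\emph{Item (4).} By construction $\dot\mH^0_{T}=\mH^0=\dot\mH^0$. For the endpoint $s=1$: for $F\in\dom(S)=\dom(T)$ one has $\|F\|_{T,1}\approx\|TF\|_2=\|\bet SF\|_2\approx\|SF\|_2\approx\|F\|_{S,1}$, using that $\bet$ is an isomorphism of $\mH^0$ and $SF\in\mH^0$; completing, $\dot\mH^1_{T}=\dot\mH^1$ with equivalent norms. As $(\dot\mH^s_{T})_s$ and $(\dot\mH^s)_s=(\dot\mH^s_{S})_s$ are interpolation scales, for $0\le s\le1$,
$$\dot\mH^s_{T}=[\dot\mH^0_{T},\dot\mH^1_{T}]_s=[\mH^0,\dot\mH^1]_s=\dot\mH^s.$$
Dually, from $\uT=S\bet$ one gets $\uT^{-1}=\bet^{-1}S^{-1}$, so $\|F\|_{\uT,-1}\approx\|\uT^{-1}F\|_2=\|\bet^{-1}S^{-1}F\|_2\approx\|S^{-1}F\|_2\approx\|F\|_{S,-1}$, giving $\dot\mH^{-1}_{\uT}=\dot\mH^{-1}$ and, by interpolation between $s=-1$ and $s=0$, $\dot\mH^s_{\uT}=\dot\mH^s$ for $-1\le s\le0$. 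The asymmetry of the ranges is exactly that in $T=\bet S$ the isomorphism $\bet$ sits on the outside of $S$ (matching the norm at the $+1$ endpoint), while in $\uT=S\bet$ it sits on the outside of $S^{-1}$ (matching it at the $-1$ endpoint); the opposite endpoints involve $\bet$ conjugated by the unbounded $S$ and give genuinely different spaces.

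\emph{Item (5).} Here $|\uT|=\sgn(\uT)\uT$ is sectorial of angle $<\pi/2$ on $\mH^0$ — because by Theorem~\ref{thm:TuTi} the restrictions of $\uT$ to $\mH^{0,\pm}_{\uT}$ are $\pm$-sectorial of angle $<\pi/2$ — with bounded holomorphic functional calculus, hence generates a bounded analytic semigroup $e^{-t|\uT|}$. Fix $-1\le s<0$, put $\theta=-s\in(0,1]$, and note that $\phi(z)=z^\theta e^{-z}$ is holomorphic on a sector of angle $<\pi$, vanishes like $|z|^\theta$ at $0$ and decays exponentially at $\infty$, hence is an admissible Littlewood--Paley function for $|\uT|$. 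For $F\in\mathcal D$, putting $G=|\uT|^{-\theta}F\in\mathcal D\subset\mH^0$ (legitimate by (1) and the calculus of fractional powers) and using $\phi(t|\uT|)G=t^\theta e^{-t|\uT|}F$,
$$\int_0^\infty t^{-2s}\|e^{-t|\uT|}F\|_2^2\,\frac{dt}{t}=\int_0^\infty\|\phi(t|\uT|)G\|_2^2\,\frac{dt}{t}\approx\|G\|_2^2=\||\uT|^{-\theta}F\|_2^2\approx\|F\|_{\uT,s}^2\approx\|F\|_{S,s}^2,$$
the last step by (4); density then finishes the proof. The divergence at $t\to0^+$ that would spoil this for $s\ge0$ is precisely what the restriction $s<0$ avoids. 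Beyond the interpolation input flagged above, the remaining work is the bookkeeping of comparing the various quadratic norms on the common dense domain $\mathcal D$ before completing, and of tracking on which side of $S$ or $S^{-1}$ the isomorphism $\bet$ sits.
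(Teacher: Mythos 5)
Your proof is correct and rests on the same \cite{AMcN} quadratic-estimate/interpolation machinery as the paper's, but two of your steps take a genuinely different route. For items (1)--(3) you unfold a direct argument from multiplicativity of the $H^\infty$ calculus where the paper simply cites \cite[Theorem~8.3]{AMcN}; equivalent in content, yours is merely more self-contained. For the endpoint $s=-1$ of (4), the paper invokes the intertwining relation $\uT S=ST$ and computes $\psi_t(\uT)SF=S\psi_t(T)F=t^{-1}\bet^{-1}\tilde\psi_t(T)F$ before passing to density of $\{SF: F\in\dom(S)\}$, while you use the boundedly-invertible factorization $\uT^{-1}=\bet^{-1}S^{-1}$ directly, a slightly shorter route to $\|F\|_{\uT,-1}\approx\|F\|_{S,-1}$. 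The real divergence is item (5): the paper again shifts by $S$, writing $e^{-t|\uT|}SF=Se^{-t|T|}F=t^{-1}\bet^{-1}\psi_t(T)F$ with $\psi(z)=ze^{-z\,\sgn(z)}$, transporting the square function to $\|S^{-1}F\|_{T,s+1}$ and then applying (4) at the exponent $s+1\in[0,1)$; you instead stay with $|\uT|$ throughout, taking $\phi(z)=z^\theta e^{-z}$ ($\theta=-s$) as an admissible Littlewood--Paley function for the \emph{sectorial} operator $|\uT|$, deriving the square function estimate from its quadratic estimate alone, and closing with (4) at exponent $s$ itself. Your route is cleaner in that it never re-enters the $T$-picture inside the proof of (5); the paper's route makes the intertwining $\uT S=ST$ explicit (reused in Lemma~\ref{lem:invertible}(2)) and avoids fractional powers entirely. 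On that last point, your step $\|G\|_2=\||\uT|^{-\theta}F\|_2\approx\|F\|_{\uT,s}$ for non-integer $\theta$ is correct but is not quite a literal corollary of your item (1), which only produces integer shifts via $T$, $|T|$ as isomorphisms; it relies on the fractional-power calculus in \cite{AMcN}, as you indicate parenthetically, and that reference should be made explicit there.
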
 

\begin{proof}  The first four items are contained in \cite[Theorem 8.3]{AMcN} and the previous sections therein with the exception of the cases $s=1$ and $s=-1$ of (4). 

For $s=1$, observe that $\|F\|_{T,1}=c\||T|F\|$. Thus, the bounded holomorphic functional calculus of $T$ on $\mH^0$ implies that $\||T|F\|\approx\|TF\|$ \cite[Theorem 8.2]{AMcN}. Finally  $\|TF\|\approx \|SF\|\approx \|F\|_{S,1}$. 

For $s=-1$, we observe the intertwining relation $\uT S= ST$ (as unbounded operators) so the functional calculi also intertwine. Hence, $\psi_{t}(\uT)SF = S\psi_{t}(T)F= t^{-1}\bet^{-1}\tilde \psi_{t}(T)F$ for $F \in \dom(T)=\dom(S)$ with $\tilde \psi(z)=z\psi(z)$. This implies that $\|SF\|_{\uT, -1} \approx \|F\|_{T,0}\approx \|F\|_{S,0}\approx \|SF\|_{S,-1}$ and the result follows from the fact that the functions $SF$, $F \in \dom(T)=\dom(S)$,  form a dense subset of  $\dot \mH^{-1}$.  

To prove (5), we proceed as above and  a calculation with $\psi(z)= ze^{-z\, \sgn(z)}$ shows that 
$ e^{-t|\uT|} SF= Se^{-t|T|}F= t^{-1}\bet^{-1}\psi_{t}(T)F$  for $F\in \dom(T)$. It follows easily for appropriate $F$ and using $0 \le s+1<1$ that 
$$\left\{ \int_{0}^\infty t^{-2s}\|e^{-t|\uT|}F\|_{2}^2\, \frac{dt}{t}\right\}^{1/2} \approx \|S^{-1}F\|_{T,s+1}\approx  \|S^{-1}F\|_{S,s+1}\approx \|F\|_{S,s}.
$$
\end{proof}

We can give another useful characterization of solutions with square function estimates in terms of the negative Sobolev space.
\begin{cor}\label{cor:dir} Let $u\in H^1_{loc}(\R^{1+n}_{+})$.
Then $u$ is a weak solution of $Lu=0$ with $\iint_{\R^{1+n}_{+}}t|\nabla_{t,x}u|^2 dt dx<\infty$ if and only if there exists $ H_{0} \in \dot \mH^{-1, +}_{\uT}$  such that  $\nabla_{A}u=e^{-t\uT}   H_{0}$. Moreover, $H_{0}$ is unique and $\|H_{0}\|_{\uT, -1}\approx (\iint_{\R^{1+n}_{+}}t|\nabla_{t,x}u|^2 dt dx)^{1/2}$. 
\end{cor}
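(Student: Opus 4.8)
The plan is to read off this statement from Theorem~\ref{thm:TuTi}(2) by transporting that characterization along the isomorphism $S$, in the same spirit as the proof of Proposition~\ref{prop:sobolev}(4)--(5). The one algebraic fact needed is the intertwining $\uT S=ST$ (as unbounded operators), which passes to the functional calculi as $b(\uT)S=Sb(T)$; in particular $\sgn(\uT)S=S\sgn(T)$, so $S$ carries $\mH^{0,+}_T=\nul(\sgn(T)-I)$ into $\nul(\sgn(\uT)-I)$, and on the positive spectral subspaces $e^{-t\uT}S=Se^{-tT}$.

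First I would do the ``only if'' direction. Let $u$ be a weak solution of $Lu=0$ with $\iint_{\R^{1+n}_+}t|\nabla_{t,x}u|^2\,dtdx<\infty$. By Theorem~\ref{thm:TuTi}(2) there is a unique $\wt H_0\in\mH^{0,+}_T$ with $\nabla_{A}u=Se^{-tT}\wt H_0$ and $\|\wt H_0\|_2\approx(\iint_{\R^{1+n}_+}t|\nabla_{t,x}u|^2\,dtdx)^{1/2}$. Set $H_0:=S\wt H_0$. Since $S$ maps $\dot\mH^0=\mH^0$ isomorphically onto $\dot\mH^{-1}$ and $\dot\mH^{-1}_{\uT}=\dot\mH^{-1}$ by Proposition~\ref{prop:sobolev}(4), $H_0\in\dot\mH^{-1}_{\uT}$, and by the remark on $\sgn$ above $H_0\in\dot\mH^{-1,+}_{\uT}$. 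The norm equivalence $\|H_0\|_{\uT,-1}=\|S\wt H_0\|_{\uT,-1}\approx\|\wt H_0\|_2$ is exactly the $s=-1$ computation carried out inside the proof of Proposition~\ref{prop:sobolev}(4), so $\|H_0\|_{\uT,-1}\approx(\iint_{\R^{1+n}_+}t|\nabla_{t,x}u|^2\,dtdx)^{1/2}$ as claimed.

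It remains to identify $e^{-t\uT}H_0$ with $Se^{-tT}\wt H_0=\nabla_A u$, and then to run the converse. For $F\in\dom(S)\cap\mH^{0,+}_T$ the identity $e^{-t\uT}SF=Se^{-tT}F$ is literally the formula established in the proof of Proposition~\ref{prop:sobolev}(5) (applied with $|T|,|\uT|$, which coincide with $T,\uT$ on the positive subspaces). Approximating $\wt H_0$ in $\mH^0$ by such $F$ and using that $S\colon\mH^0\to\dot\mH^{-1}$ and $e^{-t\uT}\colon\dot\mH^{-1,+}_{\uT}\to\dot\mH^{-1,+}_{\uT}$ are bounded, the identity extends to $\wt H_0$, giving $\nabla_A u=e^{-t\uT}H_0$. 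For the ``if'' direction, given $H_0\in\dot\mH^{-1,+}_{\uT}$ with $\nabla_A u=e^{-t\uT}H_0$, put $\wt H_0:=S^{-1}H_0\in\mH^{0,+}_T$ (legitimate since $S$ restricts to an isomorphism of $\mH^{0,+}_T$ onto $\dot\mH^{-1,+}_{\uT}$); then $\nabla_A u=e^{-t\uT}S\wt H_0=Se^{-tT}\wt H_0$, and Theorem~\ref{thm:TuTi}(2) applies. Uniqueness of $H_0$ follows from uniqueness of $\wt H_0$ through the bijection $\wt H_0\leftrightarrow S\wt H_0$, and the same computation gives the norm equivalence. I expect the only genuinely delicate point to be the extension of the commutation $Se^{-tT}=e^{-t\uT}S$ from $\dom(S)$ to all of $\mH^{0,+}_T$; everything else is bookkeeping with Theorem~\ref{thm:TuTi} and Proposition~\ref{prop:sobolev}.
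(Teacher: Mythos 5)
Your proposal is correct and follows essentially the same route as the paper's proof: both apply Theorem~\ref{thm:TuTi}(2) to get $\wt H_0\in\mH^{0,+}_T$ with $\nabla_A u=Se^{-tT}\wt H_0$, set $H_0=S\wt H_0$, use the intertwining $\uT S=ST$ to rewrite this as $e^{-t\uT}H_0$ and to transport the spectral subspace, and invoke Proposition~\ref{prop:sobolev}(4) for the norm equivalence, with the converse handled symmetrically via $S^{-1}$. You are merely more explicit than the paper about the density/approximation step needed to extend $Se^{-tT}=e^{-t\uT}S$ off $\dom(S)$, which the paper treats as implicit.
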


\begin{proof} This is a reformulation of the previous results. By Theorem \ref{thm:TuTi}, (2), given $u$, we have $\nabla_{A}u=Se^{-tT}\wt H_{0}= e^{-t\uT}  S\wt H_{0}$, where $e^{-t\uT} $ is now the semi-group extended to $\dot \mH^{-1, +}_{\uT}$. Setting $H_{0}=S \wt H_{0}$, we have $H_{0}\in   \dot\mH^{-1, +}_{\uT}$ as $\wt H_{0}\in  \mH^{0,+}_{T}$ and $\|H_{0}\|_{\uT, -1} \approx \|H_{0}\|_{S, -1} \approx\|\wt H_{0}\|_{2}$ by Proposition \ref{prop:sobolev}, (4). 

Conversely, let $H_{0}\in \dot\mH^{-1, +}_{\uT}$ and set $\wt H_{0}=S^{-1}H_{0} \in \dot\mH^{0, +}_{T}$. Then $e^{-t\uT}   H_{0}= e^{-t\uT}  S\wt H_{0} = Se^{-tT}  \wt H_{0}$ is the conormal gradient of a solution $u$ by Theorem \ref{thm:TuTi},~(2).
\end{proof}

Now, we prove a representation for energy solutions

\begin{prop}\label{cor:energy} Let $u\in H^1_{loc}(\R^{1+n}_{+})$.
Then $u$ is a weak solution of $Lu=0$ with $\iint_{\R^{1+n}_{+}}|\nabla_{t,x}u|^2 dt dx<\infty$ (i.e., $u$ is an energy solution) if and only if there exists $ H_{0} \in \dot \mH^{-1/2, +}_{\uT}$  such that  $\nabla_{A}u=e^{-t\uT}   H_{0}$. Moreover, $H_{0}$ is unique and $\|H_{0}\|_{\uT, -1/2}\approx (\iint_{\R^{1+n}_{+}}|\nabla_{t,x}u|^2 dt dx)^{1/2}$. 
\end{prop}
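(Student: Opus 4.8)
The plan is to read the statement as the $s=-1/2$ instance of the scale already in place: match energy solutions, whose conormal gradient $F:=\nabla_A u$ lies in $L^2(\R^{1+n}_+;\mathcal H^0)$ (the tangential part of each slice is curl free), with the spectral Sobolev space $\dot{\mathcal H}^{-1/2,+}_{\uT}$, exploiting that $F$ solves $\pd_tF+\uT F=0$. Throughout I use that by Proposition \ref{prop:divformasODE} the pointwise map $\nabla_{t,x}u\mapsto F$ satisfies $|\nabla_{t,x}u|\approx|F|$ a.e.\ (since $\nabla_{t,x}u=\begin{bmatrix}(BF)_\no\\ F_\ta\end{bmatrix}$ and $B$ is bounded and strictly accretive on $\mathcal H^0$), so that $\iint_{\R^{1+n}_+}|\nabla_{t,x}u|^2\,dtdx\approx\int_0^\infty\|F(t,\cdot)\|_{\mathcal H}^2\,dt$.

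For the ``if'' direction, let $H_0\in\dot{\mathcal H}^{-1/2,+}_{\uT}$. On the spectral subspace $e^{-t\uT}=e^{-t|\uT|}$, so by the spectral theorem $\int_0^\infty\|e^{-t\uT}H_0\|_{\mathcal H}^2\,dt=\tfrac12\||\uT|^{-1/2}H_0\|_{\mathcal H}^2\approx\|H_0\|_{\uT,-1/2}^2$, which is exactly Proposition \ref{prop:sobolev}(5) and (4) at $s=-1/2$. Suppose first in addition that $H_0\in\mathcal H^{0,+}_{\uT}$; then Theorem \ref{thm:TuTi}(1) produces a weak solution $u$ with $\nabla_Au=e^{-t\uT}H_0$, which by the computation just made has $\iint_{\R^{1+n}_+}|\nabla_{t,x}u|^2<\infty$, i.e.\ is an energy solution, with the asserted norm equivalence. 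For general $H_0$, use that $\dot{\mathcal H}^{-1/2,+}_{\uT}\cap\mathcal H^{0,+}_{\uT}$ is dense in $\dot{\mathcal H}^{-1/2,+}_{\uT}$ (density of the intersection of the Sobolev scale, which respects the $\sgn(\uT)$-splitting): approximating $H_0$ by $H_0^{(k)}$ in this dense set and normalising the solutions $u_k$ modulo constants, the norm equivalence makes $(u_k)$ Cauchy in $\dot H^1(\R^{1+n}_+)/\C$; its limit $u$ is an energy solution, $\nabla_Au=\lim\nabla_Au_k=\lim e^{-t\uT}H_0^{(k)}=e^{-t\uT}H_0$ in $L^2(\R^{1+n}_+)$, and the norm equivalence passes to the limit. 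Uniqueness of $H_0$ is immediate from strong continuity of the semigroup on $\dot{\mathcal H}^{-1/2,+}_{\uT}$: $H_0=\lim_{t\to0^+}e^{-t\uT}H_0$.

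For the ``only if'' direction, let $u$ be an energy solution and $F=\nabla_Au\in L^2(\R^{1+n}_+;\mathcal H^0)$; by Proposition \ref{prop:divformasODE} together with $\uT=\dirac B_{|\mathcal H^0}$, $F$ solves $\pd_tF+\uT F=0$ distributionally on $(0,\infty)$. By the interior regularity theory for such first order systems (as in \cite{AAM,AA1}), $F\in C^\infty((0,\infty);\mathcal H^0)$ and satisfies the equation classically; decomposing $F(a)=F^+(a)+F^-(a)$ along $\mathcal H^{0,\pm}_{\uT}$, the $-$ part evolves forward as $e^{(t-a)|\uT^-|}F^-(a)$ with $\|e^{s|\uT^-|}x\|\gtrsim\|x\|$ (because the sectorial semigroup $e^{s\uT^-}$ is uniformly bounded), so $\int_a^\infty\|F(t)\|^2\,dt<\infty$ forces $F^-(a)=0$; hence $F(a)\in\mathcal H^{0,+}_{\uT}$ and $F(t)=e^{-(t-a)\uT}F(a)$ for all $t\ge a>0$. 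Consequently, by the spectral identity of the previous paragraph, $\|F(a)\|_{\uT,-1/2}^2\approx\int_a^\infty\|F(t)\|_{\mathcal H}^2\,dt\le\|F\|_{L^2(\R^{1+n}_+)}^2$ uniformly in $a\in(0,1)$; since $\dot{\mathcal H}^{-1/2,+}_{\uT}$ is a Hilbert space, a subsequence $F(a_j)\rightharpoonup H_0\in\dot{\mathcal H}^{-1/2,+}_{\uT}$ as $a_j\downarrow0$, with $\|H_0\|_{\uT,-1/2}\lesssim\|F\|_{L^2}$. For fixed $t>0$ we have $e^{-t\uT}F(a_j)=F(t+a_j)\to F(t)$ in $\mathcal H$, while $e^{-t\uT}F(a_j)\rightharpoonup e^{-t\uT}H_0$ in $\mathcal H$ (the map $e^{-t\uT}\colon\dot{\mathcal H}^{-1/2,+}_{\uT}\to\mathcal H^0$ being bounded for each $t>0$); hence $F(t)=e^{-t\uT}H_0$, i.e.\ $\nabla_Au=e^{-t\uT}H_0$. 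Uniqueness and the norm equivalence then follow from the ``if'' direction.

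The only non-formal ingredient, and the step I expect to be the main obstacle, is the interior regularity invoked above: that a distributional $L^2_{\loc}$ solution of $\pd_tF+\uT F=0$ is smooth into $\mathcal H^0$ for $t>0$ and solves the equation classically, so that the spectral splitting argument applies and kills the $\mathcal H^{0,-}_{\uT}$ component. This is the same mechanism underlying the ``only if'' parts of Theorem \ref{thm:AAM-AA1}, and I would cite it from \cite{AA1} rather than reprove it; everything else is bookkeeping with the functional calculus of $\uT$ (Proposition \ref{prop:sobolev}) and the boundedness and accretivity of $B$.
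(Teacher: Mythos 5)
Your ``if'' direction is essentially the paper's: you regularize $H_0$ within $\dot\mH^{-1/2,+}_{\uT}$, feed the regularizations into Theorem~\ref{thm:TuTi}(1), and pass to the limit using Proposition~\ref{prop:sobolev}(5); the paper does exactly this with the particular approximants $H_\ep=e^{-\ep\uT}H_0$ (which land in $\dot\mH^{-1/2,+}_{\uT}\cap\dot\mH^{1/2,+}_{\uT}\subset\mH^{0,+}_{\uT}$).

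Your ``only if'' direction, however, is a genuinely different route. The paper never touches the first-order ODE for $F$ directly at this stage. Instead it sets $H_0=\nabla_Au_{|t=0}$, which lies in $\dot\mH^{-1/2}$ because the variational theory of Section~3 already gives the trace $u_{|t=0}\in\dot H^{1/2}$ and the conormal derivative $\pd_{\nu_A}u_{|t=0}\in\dot H^{-1/2}$. It then splits $H_0=H_0^++H_0^-$ along the spectral decomposition of $\dot\mH^{-1/2}$, uses the already-proved ``if'' direction (in both half-spaces) to build energy solutions $u^\pm$ with data $H_0^\pm$, glues $v:=u-u^+$ (upper) and $v:=u^-$ (lower) into a global energy solution on $\R^{1+n}$ via the matching traces and conormal derivatives, and concludes $v=0$ from Lemma~\ref{lem:uniqueness}. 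This is short and entirely variational. Your argument instead reproves the semigroup representation from scratch: interior regularity of the ODE $\pd_tF+\uT F=0$ (cited from~\cite{AA1}), the backward-in-time rigidity $F^-(a)=e^{-(s-a)|\uT^-|}F^-(s)$ forcing $F^-(a)=0$, and a weak-compactness limit $F(a_j)\rightharpoonup H_0$ in $\dot\mH^{-1/2,+}_{\uT}$. This works, but it essentially re-derives the ``only if'' mechanism of Theorem~\ref{thm:AAM-AA1}, which the paper deliberately sidesteps here. The advantage of the paper's route is that it requires no ODE regularity and no weak limits --- only Lax--Milgram uniqueness on $\R^{1+n}$; the advantage of yours is that it is self-contained at the level of the first-order formalism and would survive if one did not want to set up the variational machinery. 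Two small cautions: your appeal to the ``spectral theorem'' for $\int_0^\infty\|e^{-t\uT}H_0\|^2\,dt$ should be attributed to the holomorphic functional calculus / square function estimate, since $\uT$ is not self-adjoint (you do cite Proposition~\ref{prop:sobolev}(5), which is the right reference); and the step $e^{-t\uT}F(a_j)\to F(t)$ needs the continuity of $t\mapsto F(t)$ in $\mH$ for $t>0$, which again comes from the interior regularity you import.
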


\begin{proof}  Let us prove the converse first. If $H_{0}\in \dot\mH^{-1/2, +}_{\uT}$, then $H_{\ep}=e^{-\ep\uT}H_{0} \in \dot\mH^{-1/2,+}_{\uT} \cap \dot\mH^{1/2,+}_{\uT} \subset \dot\mH^{0,+}_{\uT}$ for any $\ep>0$. By Theorem \ref{thm:TuTi}, 
$e^{-t\uT} H_{\ep}$ is the conormal gradient of some solution $u_{\ep}$. As $H_{\ep}$ converges  to $H_{0}$ in $\dot\mH^{-1/2, +}_{\uT}$, it is easy to conclude that $u_{\ep}$ converges to a solution $u$ is the energy space and  
$ (\iint_{\R^{1+n}_{+}}|\nabla_{t,x}u|^2 dt dx)^{1/2} \approx \|H_{0}\|_{\uT, -1/2} $. 

Assume now that $u$ is a weak solution of $Lu=0$ with $\iint_{\R^{1+n}_{+}}|\nabla_{t,x}u|^2 dt dx<\infty$.  Set $H_{0}= \nabla_{A}u_{|t=0} \in  \dot \mH^{-1/2}$. Decomposing $H_{0}=H_{0}^+
+ H_{0}^-$ according to the spectral decomposition $\dot \mH^{-1/2}= \dot \mH^{-1/2, +}_{\uT} \oplus \dot \mH^{-1/2, -}_{\uT}$ from Proposition \ref{prop:sobolev} (2) and (4), and using the implication just proved,  $ H_{0}^+$ is the trace of the conormal gradient of an energy  solution $u^+$ in the upper half-space  and, by the same result in the lower half-space,  $ H_{0}^-$ is the trace of the conormal gradient of an energy solution $u^-$ in the lower half-space. It is then easy to see that the function $v$, defined by $v= u-u^+$ on the upper half-space and $v=u^-$ in the lower half-space, is an energy solution of $Lv=0$ in $\R^{1+n}$. By Lemma \ref{lem:uniqueness}, $v=0$ (modulo constants) so that $u=u^+$ and $H_{0}=H_{0}^+$. Thus,
$\nabla_{A}u= \nabla_{A}u^+= e^{-t\uT}H_{0}^+$ from the first part of the proof.
\end{proof}

\begin{cor}\label{cor:spectral} The elements of $\dot \mH^{-1/2, +}_{\uT}$ are  $\begin{bmatrix}
      f \\ \Gamma^A_{ND}f \end{bmatrix}$, $f\in \dot \mH^{-1/2}_{\no}$. They can also be written $\begin{bmatrix} \Gamma^A_{DN} g \\ g \end{bmatrix}$, $g\in   \dot \mH^{-1/2}_{\ta}$. 
\end{cor}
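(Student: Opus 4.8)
The plan is to read this off Proposition~\ref{cor:energy}, which already identifies $\dot\mH^{-1/2,+}_{\uT}$ with the set of boundary conormal gradients $\nabla_{A}u_{|t=0}$ of energy solutions $u$ in $\R^{1+n}_+$; the two asserted descriptions then follow from the solvability of the Neumann and regularity problems in the energy class together with the definitions \eqref{eq:ND}--\eqref{eq:DN} of $\Gamma^A_{ND}$ and $\Gamma^A_{DN}$.

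First I would record the decomposition. For any energy solution $u$, Proposition~\ref{cor:energy} gives $\nabla_{A}u_{|t=0}\in\dot\mH^{-1/2,+}_{\uT}\subset\dot\mH^{-1/2}=\dot\mH^{-1/2}_{\no}\oplus\dot\mH^{-1/2}_{\ta}$, and by definition its normal component is $\pd_{\nu_{A}}u_{|t=0}\in\dot\mH^{-1/2}_{\no}$ while its tangential component is $\nabla_{x}u_{|t=0}\in\dot\mH^{-1/2}_{\ta}$. By Theorem~\ref{thm:inverse} and the identities recorded immediately after it, these two components determine one another: $\nabla_{x}u_{|t=0}=\Gamma^A_{ND}\bigl(\pd_{\nu_{A}}u_{|t=0}\bigr)$ and $\pd_{\nu_{A}}u_{|t=0}=\Gamma^A_{DN}\bigl(\nabla_{x}u_{|t=0}\bigr)$.

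For the first description: if $H_{0}\in\dot\mH^{-1/2,+}_{\uT}$, write $H_{0}=\nabla_{A}u_{|t=0}$ for an energy solution $u$ as in Proposition~\ref{cor:energy} and set $f=(H_{0})_{\no}=\pd_{\nu_{A}}u_{|t=0}\in\dot\mH^{-1/2}_{\no}$; then $H_{0}=\begin{bmatrix} f \\ \Gamma^A_{ND}f\end{bmatrix}$ by the previous paragraph. Conversely, given $f\in\dot\mH^{-1/2}_{\no}=\dot H^{-1/2}(\R^n)$, Theorem~\ref{thm:Neu} applied with $\ell=-f$ produces an energy solution $u$ with $\pd_{\nu_{A}}u_{|t=0}=f$, hence $\nabla_{x}u_{|t=0}=\Gamma^A_{ND}f$ by \eqref{eq:ND}, and Proposition~\ref{cor:energy} places $\nabla_{A}u_{|t=0}=\begin{bmatrix} f \\ \Gamma^A_{ND}f\end{bmatrix}$ in $\dot\mH^{-1/2,+}_{\uT}$; since the first component recovers $f$, the correspondence is in fact a bijection $\dot\mH^{-1/2}_{\no}\to\dot\mH^{-1/2,+}_{\uT}$. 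The second description is obtained in exactly the same way, using the energy solvability of the regularity problem (the existence theorem stated just after Theorem~\ref{thm:Neu}): as $u$ runs over all energy solutions its Dirichlet trace runs over all of $\dot H^{1/2}(\R^n)$, so $\nabla_{x}u_{|t=0}=g$ runs over all of $\dot\mH^{-1/2}_{\ta}$, and $\pd_{\nu_{A}}u_{|t=0}=\Gamma^A_{DN}g$ by \eqref{eq:DN}, giving $H_{0}=\begin{bmatrix}\Gamma^A_{DN}g \\ g\end{bmatrix}$.

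I expect no serious obstacle here: the statement is pure bookkeeping once Proposition~\ref{cor:energy} and the Lax--Milgram solvability results are available. The only points requiring a moment's care are the sign convention $\pd_{\nu_{A}}u_{|t=0}=-\ell$ in Theorem~\ref{thm:Neu}, and the identification---already used in the proof of Proposition~\ref{cor:energy}---of the $\dot\mH^{-1/2}$-trace of $e^{-t\uT}H_{0}$ with $H_{0}$ itself, carrying normal component $\pd_{\nu_{A}}u_{|t=0}$ and tangential component $\nabla_{x}u_{|t=0}$.
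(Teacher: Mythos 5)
Your proof is correct and follows essentially the same route as the paper: identify $\dot\mH^{-1/2,+}_{\uT}$ with conormal-gradient traces of energy solutions via Proposition~\ref{cor:energy}, then read off the two parametrizations from the definitions of $\Gamma^A_{ND}$, $\Gamma^A_{DN}$ and the energy solvability of the Neumann and regularity problems. The only cosmetic difference is that the paper derives the second description from the first via Theorem~\ref{thm:inverse}, whereas you verify it directly; both are equally valid.
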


\begin{proof} It suffices to check the first representation by Theorem \ref{thm:inverse}. By the previous result, $H_{0}\in \dot \mH^{-1/2, +}_{\uT}$ if and only if $H_{0}= \nabla_{A}u_{|t=0}$ for some solution $u$ in the energy space. So if $f$ is the conormal derivative $\pd_{A}u_{|t=0}$ in $\dot \mH^{-1/2}_{\no}$ then the tangential gradient at $t=0$ is $\Gamma^A_{ND}f$ by \eqref{eq:DN}. 

Conversely, if $f\in \dot \mH^{-1/2}_{\no}$, then we let $u$ be the energy solution to $Lu=0$  with Neumann datum $f$. We know  $\Gamma^A_{ND}f= \nabla_{x}u_{|t=0} $ and that $\nabla_{A}u=e^{-t\uT}H_{0}$ for some $H_{0}\in \dot \mH^{-1/2, +}_{\uT}$. It follows that $\begin{bmatrix}
      f \\ \Gamma^A_{ND}f \end{bmatrix}= H_{0} \in \dot \mH^{-1/2, +}_{\uT}$. 
\end{proof}

\begin{rem}
It is interesting to compare Theorem \ref{thm:TuTi}, Corollary \ref{cor:dir} and Proposition \ref{cor:energy}. The  use of $\uT$ allows the same representation for conormal gradients of solutions and the only difference is the space to which the trace belongs. Solving Neumann, Dirichlet and/or regularity  problems amount to proving boundedness of Neumann to Dirichlet map and/or the Dirichlet to Neumann map with different topologies on the boundary. 
\end{rem}

\begin{rem}
In \cite[section 5]{AAM}, some \textit{a posteriori} identification of the solutions coming from the $DB$ formalism in $L^2$ is made with energy solutions, upon some further assumptions. But note that  energy solutions require a different trace space. Here,  the extension of this formalism to Sobolev spaces allows to prove \textit{a priori } representation  (Proposition \ref{cor:energy}) for energy solutions. 
\end{rem}

\begin{rem} We note that Proposition \ref{cor:energy} was stated without proof for  systems on the ball in \cite{AR}, even with some  radially dependent coefficients. 
While this article was in its final stage, we learned that this kind of representations of solutions with data in Sobolev spaces was pursued in more generality by Andreas Ros\'en \cite{R}, and has some overlap with ours. 
\end{rem}

\section{The operator theoretic lemma}

The main ingredient in our proof is the next lemma which will provide a factorisation of the 
boundary maps with simpler operators to analyze. The following lemma is essentially taken from  \cite[Section 6]{AGHMc}. 

\begin{lem}\label{lem:key}
Let $X_{1}, X_{2}$ be  Banach spaces and let $\mZ= X_{1}\oplus X_{2}$ whose elements are written as 
$ \begin{bmatrix} u_{\no} \\ u_{\ta}\end{bmatrix} $. Let $S= \begin{bmatrix} s_{11}& s_{12}\\ s_{21} & s_{22} \end{bmatrix} \in \mL(\mZ)$ such that
\begin{enumerate}
  \item $S^2=I_{\mZ}$.
  \item  There exists $c\in (0,1)$ such that for all $u\in N(S\pm I)$, one has
  $$
  c^{-1} \|u_{\no}\|_{X_{1}}\le \|u_{\ta}\|_{X_{2}}\le c\|u_{\no}\|_{X_{1}}.
  $$ 
\end{enumerate}
Then, 
$s_{12}, s_{21}, s_{11}\pm I, s_{22}\pm I$ are one-one with closed range in the respective $\mL(X_{i}, X_{j})$. 
\end{lem}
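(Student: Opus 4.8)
The plan is to exploit the block decomposition of $S$ relative to the splitting $\mZ = X_1 \oplus X_2$ together with condition (1), which forces the entries of $S$ to satisfy a family of quadratic identities. Writing out $S^2 = I$ in block form gives
\begin{equation*}
s_{11}^2 + s_{12}s_{21} = I_{X_1}, \quad s_{22}^2 + s_{21}s_{12} = I_{X_2}, \quad s_{11}s_{12} + s_{12}s_{22} = 0, \quad s_{21}s_{11} + s_{22}s_{21} = 0.
\end{equation*}
In particular $(I - s_{11})(I + s_{11}) = s_{12}s_{21}$ and $(I-s_{22})(I+s_{22}) = s_{21}s_{12}$. So once I know $s_{12}$ and $s_{21}$ are one-one with closed range, the factorisations above will propagate that property to $s_{11}\pm I$ and $s_{22}\pm I$ (a product of a bounded operator and a one-one closed-range operator need not in general be one-one, so I will instead argue directly: if $(s_{11}-I)u = 0$ then reading the appropriate identity one derives $s_{21}u = 0$, hence $u=0$; and for closed range one uses that $s_{12}s_{21}$ has closed range plus the $c$-estimate). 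So the crux is the claim for $s_{12}$ and $s_{21}$.

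For $s_{12}$: take $u_{\ta} \in X_2$ with $s_{12}u_{\ta} = 0$, and I want $u_{\ta} = 0$. The idea is to manufacture an element of $N(S-I)$ or $N(S+I)$ out of $u_{\ta}$ and then invoke the lower bound in (2), which in particular says the tangential part of any spectral vector is controlled by its normal part; if the normal part vanishes the whole vector vanishes. Concretely, since $S^2 = I$, the operators $P_\pm = \tfrac12(I \pm S)$ are complementary projections onto $N(S \mp I)$, and $\mZ = N(S-I) \oplus N(S+I)$. Apply $P_+$ and $P_-$ to the vector $\begin{bmatrix} 0 \\ u_{\ta}\end{bmatrix}$: its normal components are $\tfrac12 s_{12}u_{\ta} = 0$ and $-\tfrac12 s_{12}u_{\ta} = 0$ respectively, so both $P_+\begin{bmatrix} 0 \\ u_{\ta}\end{bmatrix}$ and $P_-\begin{bmatrix} 0 \\ u_{\ta}\end{bmatrix}$ lie in the respective kernels and have vanishing normal part. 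By condition (2) their tangential parts also vanish, so $\begin{bmatrix} 0 \\ u_{\ta}\end{bmatrix} = P_+\begin{bmatrix} 0 \\ u_{\ta}\end{bmatrix} + P_-\begin{bmatrix} 0 \\ u_{\ta}\end{bmatrix} = 0$, i.e. $u_{\ta} = 0$. This shows $s_{12}$ is one-one; the argument for $s_{21}$ is identical with the roles of the two summands exchanged.

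For closed range of $s_{12}$: suppose $s_{12}u_{\ta}^{(n)} \to v_{\no}$ in $X_1$; I must produce $u_{\ta}$ with $s_{12}u_{\ta} = v_{\no}$. Set $w^{(n)} = P_+\begin{bmatrix} 0 \\ u_{\ta}^{(n)}\end{bmatrix} \in N(S-I)$; its normal part is $\tfrac12 s_{12}u_{\ta}^{(n)}$, which converges, so by the \emph{lower} bound in (2) (normal part controls tangential part from above — here I need $\|u_{\ta}\|_{X_2} \le c\|u_{\no}\|_{X_1}$) the tangential parts $\tfrac12 u_{\ta}^{(n)} + (\text{stuff})$ also form a Cauchy sequence, hence $w^{(n)}$ converges in $\mZ$ to some $w \in N(S-I)$ (the kernel is closed since $S$ is bounded). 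Passing to the limit, $w$ has normal part $\tfrac12 v_{\no}$ and, reading $Sw = w$, its tangential part $w_{\ta}$ satisfies $s_{12}w_{\ta} = \tfrac12 v_{\no}$ up to a harmless scalar, giving the desired preimage. The same works for $s_{21}$.

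The step I expect to be the main obstacle is bookkeeping the use of condition (2) in the closed-range argument: one must be careful that the estimate being invoked is the \emph{right} one of the two inequalities ($\|u_{\no}\| \lesssim \|u_{\ta}\|$ versus $\|u_{\ta}\| \lesssim \|u_{\no}\|$) for the operator at hand, and that the Cauchy-ness transferred from normal to tangential parts genuinely yields convergence of the full vector $w^{(n)}$ and not merely of one component. Everything else — the quadratic identities from $S^2=I$, the projection formulas, the propagation from $s_{12},s_{21}$ to $s_{ii}\pm I$ — is routine linear algebra in a Banach-space setting.
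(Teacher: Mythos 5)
Your approach shares the paper's key starting point: the spectral splitting $\mZ = N(S-I)\oplus N(S+I)$ via $P^\pm=\tfrac12(I\pm S)$, together with the comparability of normal and tangential parts on each spectral subspace from condition (2). (As printed, with $c\in(0,1)$ on both sides, (2) would force $u=0$ on $N(S\pm I)$; the intended reading, which you and the paper both use, is the two-sided comparability $c\|u_\no\|\le\|u_\ta\|\le c^{-1}\|u_\no\|$.) Your injectivity argument for $s_{12}$ is correct.

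There is, however, a concrete error in your closed-range argument for $s_{12}$. From $Sw=w$ you get $s_{11}w_\no+s_{12}w_\ta=w_\no$, hence $s_{12}w_\ta=(I-s_{11})w_\no=(I-s_{11})(\tfrac12 v_\no)$, which is \emph{not} a scalar multiple of $v_\no$: $w$ alone does not yield the sought preimage. The repair is to run the same Cauchy argument on \emph{both} $P^+\begin{bmatrix}0\\u_\ta^{(n)}\end{bmatrix}$ and $P^-\begin{bmatrix}0\\u_\ta^{(n)}\end{bmatrix}$ (each has normal part $\pm\tfrac12 s_{12}u_\ta^{(n)}$, hence is Cauchy by (2)), so their sum $\begin{bmatrix}0\\u_\ta^{(n)}\end{bmatrix}$ converges, giving $u_\ta^{(n)}\to u_\ta$ and $s_{12}u_\ta=v_\no$ by continuity. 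Your propagation step from $s_{12},s_{21}$ to $s_{11}\pm I$, $s_{22}\pm I$ does in fact go through, though not for the reason you cite: the $(1,1)$-block of $S^2=I$ gives the commuting factorisation $(I-s_{11})(I+s_{11})=(I+s_{11})(I-s_{11})=s_{12}s_{21}$, and a product of two commuting bounded operators which is bounded below has each factor bounded below ($C\|x\|\le\|ABx\|\le\|A\|\,\|Bx\|$ gives $\|Bx\|\ge(C/\|A\|)\|x\|$, and commutativity handles $A$). No further appeal to the $c$-estimate is needed there.

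The paper's proof is structurally different and more economical: rather than separating ``one-one'' from ``closed range'' and then propagating, it establishes the lower bound (boundedness below) \emph{directly} and \emph{simultaneously} for all six operators. Writing $Q_+,Q_-$ for the coordinate projections onto $X_1\oplus\{0\}$ and $\{0\}\oplus X_2$, (2) gives $\|Q_\pm P^\pm u\|\approx\|P^\pm u\|$; using $Q_-Q_+=0$ this reverses to $\|P^\pm Q_\pm u\|\approx\|Q_\pm u\|$ and then to $\|Q_\mp P^\pm Q_\pm u\|\approx\|Q_\pm u\|\approx\|Q_\pm P^\pm Q_\pm u\|$. Taking $u=\begin{bmatrix}0\\\phi\end{bmatrix}$ one computes $2Q_+P^+u=\begin{bmatrix}s_{12}\phi\\0\end{bmatrix}$ and $2Q_-P^\pm u=\begin{bmatrix}0\\(I\pm s_{22})\phi\end{bmatrix}$, yielding $\|s_{12}\phi\|\approx\|(I\pm s_{22})\phi\|\approx\|\phi\|$ in one stroke; $u=\begin{bmatrix}\phi\\0\end{bmatrix}$ gives the other three. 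Each of these lower bounds is exactly ``one-one with closed range'', with no separate limiting argument and no propagation step.
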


\begin{proof} Let $P^\pm= \frac 12(I\pm S)$ be the pair of projectors on $\mZ$ associated to $S$ by (1) and $Q_{\pm}$ the pair of projectors  on $X_{1}\oplus \{0\}$ and $\{0\}\oplus X_{2}$.  The assumption (2) implies
$$
\|Q_{+}P^\pm u \| \approx \|Q_{-}P^\pm u \| \approx \|P^\pm u \|,  \quad u \in \mZ.
 $$
We infer that 
$$
\|P^+Q_{\pm}u\| \approx \|P^-Q_{\pm}u\| \approx \|Q_{\pm}u\|.
$$

For example,  using $Q_{-}Q_{+}=0$, then
$
 \|Q_{+}u\| \le \|P^+Q_{+}u\| + \|P^-Q_{+}u\|  \lesssim  \|P^+Q_{+}u\| + \|Q_{-}P^-Q_{+}u\| \lesssim   \|P^+Q_{+}u\| + \|Q_{-}P^+Q_{+}u\| \lesssim  \|P^+Q_{+}u\| \lesssim \|Q_{+}u\|.$
 
 Going further, one easily sees that 
$$
\|Q_{-}P^+Q_{\pm}u\| \approx \|Q_{+}P^+Q_{\pm}u\| |\approx \|Q_{\pm}u\| \approx  \|Q_{-}P^-Q_{\pm}u\|\approx  \|Q_{+}P^-Q_{\pm}u\|.
$$
 
 Thus, let $\phi \in X_{2}$ and  $u= \begin{bmatrix} 0\\ \phi\end{bmatrix}  = Q_{-}u$. One checks that 
 $$
  \begin{bmatrix} s_{12}\phi\\ 0\end{bmatrix}  = Q_{+}Su= Q_{+}(2P^+-I)u= 2Q_{+}P^+Q_{-}u$$
  and 
  $$
   \begin{bmatrix} 0\\ (I  \pm s_{22})\phi\end{bmatrix}= 2Q_{-}P^\pm u = 2Q_{-}P^+Q_{-}u,$$
   hence
   $$
   \|s_{12}\phi\|_{X_{1}}\approx \|(I \pm s_{22})\phi\|_{X_{2}} \approx \|\phi\|_{X_{2}}.
   $$
   Similarly, with $\phi \in X_{1}$ and  $u= \begin{bmatrix} \phi\\ 0\end{bmatrix}  = Q_{+}u$, one obtains 
   $$
   \|s_{21}\phi\|_{X_{2}}\approx \|(I \pm s_{11})\phi\|_{X_{1}} \approx \|\phi\|_{X_{1}}.
   $$

 \end{proof}
 
  \begin{lem}\label{lem:invertible} 
\begin{enumerate}
  \item The operator $\sgn(\uT)$ satisfies the hypotheses of the above lemma on 
  $\dot \mH^{-1/2}_{\uT}= \dot\mH^{-1/2}=\dot\mH^{-1/2}_{\no} \oplus \dot\mH^{-1/2}_{\ta}$. Moreover, the operators $s_{12}(\uT)$, $s_{21}(\uT)$, $s_{11}(\uT)\pm I$, $s_{22}(\uT)\pm I$ are invertible. 
  \item The same conclusion holds replacing $\uT$  by  $T$  on 
  $\dot \mH^{1/2}_{T}= \dot\mH^{1/2}=\dot\mH^{1/2}_{\no} \oplus \dot\mH^{1/2}_{\ta}$.
\end{enumerate}  \end{lem}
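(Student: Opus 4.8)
The plan is to reduce both statements to an application of Lemma \ref{lem:key}, the only nontrivial input being the verification of hypothesis (2) for the operator $\sgn(\uT)$ on $\dot\mH^{-1/2}$, together with a duality/similarity argument to transfer the conclusion to $T$ on $\dot\mH^{1/2}$. First I would record that by Proposition \ref{prop:sobolev} the operator $\sgn(\uT)$ is a bounded involution on $\dot\mH^{-1/2}=\dot\mH^{-1/2}_{\uT}$, which topologically splits as $\dot\mH^{-1/2,+}_{\uT}\oplus\dot\mH^{-1/2,-}_{\uT}$ with $\dot\mH^{-1/2,\pm}_{\uT}=\nul(\sgn(\uT)\mp I)$; this is exactly hypothesis (1) of Lemma \ref{lem:key} with $X_1=\dot\mH^{-1/2}_{\no}$, $X_2=\dot\mH^{-1/2}_{\ta}$ and $S=\sgn(\uT)$. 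The nontrivial part is (2): I must show that for $H_0\in\dot\mH^{-1/2,+}_{\uT}$ (and symmetrically for the minus space, using the lower half-space) one has $\|(H_0)_{\ta}\|_{\dot\mH^{-1/2}_{\ta}}\approx\|(H_0)_{\no}\|_{\dot\mH^{-1/2}_{\no}}$. But this is precisely the content of Corollary \ref{cor:spectral} combined with Theorem \ref{thm:inverse}: every element of $\dot\mH^{-1/2,+}_{\uT}$ has the form $\begin{bmatrix} f\\ \Gamma^A_{ND}f\end{bmatrix}$ with $f\in\dot\mH^{-1/2}_{\no}$, and $\Gamma^A_{ND}$ is bounded and invertible from $\dot\mH^{-1/2}_{\no}$ onto $\dot\mH^{-1/2}_{\ta}$ with inverse $\Gamma^A_{DN}$, so the two-sided bound holds with $c$ depending only on the norms of $\Gamma^A_{ND}$ and $\Gamma^A_{DN}$, hence only on the ellipticity constants. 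The minus-space bound is obtained identically by working in $\R^{1+n}_-$ (or by noting the spectral spaces for the lower half-space are governed by $e^{t\uT}$ and the same Neumann-to-Dirichlet machinery applies).

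With (1) and (2) verified, Lemma \ref{lem:key} applied to $S=\sgn(\uT)=\begin{bmatrix} s_{11}(\uT)& s_{12}(\uT)\\ s_{21}(\uT)& s_{22}(\uT)\end{bmatrix}$ yields that $s_{12}(\uT)$, $s_{21}(\uT)$, $s_{11}(\uT)\pm I$, $s_{22}(\uT)\pm I$ are one-one with closed range. To upgrade "one-one with closed range" to "invertible" I would argue surjectivity using that $\sgn(\uT)$ is a genuine involution on all of $\dot\mH^{-1/2}$, not merely a bounded operator: the same identities in the proof of Lemma \ref{lem:key}, read with the roles of the projectors $P^\pm$ and $Q_\pm$ interchanged, give the reverse estimates and hence density of the ranges; combined with closed range this gives surjectivity. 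Concretely, from $\sgn(\uT)^2=I$ one gets for $\phi\in X_2$ that $s_{12}(\uT)s_{21}(\uT)+(s_{11}(\uT))^2=\ldots$ etc., but cleaner is to observe that $P^+Q_-$ maps $X_2$ \emph{onto} a complemented copy inside $\dot\mH^{-1/2,+}_{\uT}$ and $Q_+P^+$ restricted there is an isomorphism onto $X_1$ by the displayed two-sided estimates in the proof of Lemma \ref{lem:key}; tracing through, $s_{12}(\uT)=2Q_+P^+Q_-$ is onto $X_1$. The same for the other three operators. This is the step I expect to require the most care, though it is routine functional analysis.

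For part (2) I would use the similarity and duality relating $T$ to $\uT$. By Proposition \ref{prop:sobolev} the operator $\sgn(T)$ is a bounded involution on $\dot\mH^{1/2}=\dot\mH^{1/2}_T$ with the analogous spectral splitting, so hypothesis (1) of Lemma \ref{lem:key} holds with $X_1=\dot\mH^{1/2}_{\no}$, $X_2=\dot\mH^{1/2}_{\ta}$. For hypothesis (2), recall the intertwining $\uT S=ST$ from the proof of Proposition \ref{prop:sobolev}, so that $S$ maps $\dot\mH^{1/2,\pm}_T$ isomorphically onto $\dot\mH^{-1/2,\pm}_{\uT}$ (using that $S\colon\dot\mH^{1/2}\to\dot\mH^{-1/2}$ is an isomorphism preserving normal and tangential components up to the Riesz transform bookkeeping recorded after Proposition \ref{prop:sobolev}); the two-sided bound on $\dot\mH^{-1/2,\pm}_{\uT}$ from part (1) then transfers to $\dot\mH^{1/2,\pm}_T$. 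Alternatively one invokes Lemma \ref{lem:key} for $T$ on $\dot\mH^{1/2}_T$ using that, by Corollary \ref{cor:spectral} applied with $s=1/2$ in place of $-1/2$ (equivalently, by the boundary maps in the regularity/Dirichlet $\dot H^{1/2}$ topology), elements of $\dot\mH^{1/2,+}_T$ are again of the form $\begin{bmatrix} f\\ \Gamma f\end{bmatrix}$ with a bounded invertible boundary map. Either way Lemma \ref{lem:key} and the surjectivity argument of the previous paragraph give invertibility of $s_{12}(T)$, $s_{21}(T)$, $s_{11}(T)\pm I$, $s_{22}(T)\pm I$. The main obstacle throughout is not any single estimate but keeping straight which space ($\dot\mH^{\pm1/2}$, normal vs.\ tangential component, upper vs.\ lower half-space) each bound lives on, and ensuring the constants depend only on $\Lambda$ and $\lambda$; the substantive analytic facts are all already established in Theorem \ref{thm:inverse}, Corollary \ref{cor:spectral} and Proposition \ref{prop:sobolev}.
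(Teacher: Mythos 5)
Your reduction to Lemma \ref{lem:key}, and your verification of its hypothesis~(2) on $\dot\mH^{-1/2}$ via Corollary \ref{cor:spectral} together with Theorem \ref{thm:inverse} (and the lower half-space analogue for the minus spectral space), match the paper exactly, as does your use of the intertwining $S\,\sgn(T)=\sgn(\uT)\,S$ to transfer the conclusion to $\dot\mH^{1/2}$.

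However, the step upgrading ``one-one with closed range'' to ``invertible'' is where your argument breaks down, and this is precisely the step the paper treats with care. You claim that re-reading the proof of Lemma \ref{lem:key} with the roles of $P^\pm$ and $Q_\pm$ interchanged, or observing that $Q_+$ restricted to $\dot\mH^{-1/2,+}_{\uT}$ is an isomorphism ``by the displayed two-sided estimates,'' yields surjectivity of $s_{12}(\uT)$. But the displayed estimates only say $\|Q_\pm P^+u\|\approx\|P^+u\|$ and $\|P^\pm Q_\pm u\|\approx\|Q_\pm u\|$: these are lower bounds, giving injectivity with closed range of the restricted projections, not surjectivity. The hypotheses of Lemma \ref{lem:key} genuinely do not force surjectivity. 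For a concrete counterexample, take $X_1=X_2=\ell^2(\N)$, let $R$ denote the unilateral right shift, and set $V_+=\{(R\phi,\phi)\colon\phi\in\ell^2\}$, $V_-=\{(\psi,-\tfrac12 R\psi)\colon\psi\in\ell^2\}$. Both satisfy the comparability hypothesis~(2) (on $V_+$ it is $\|R\phi\|=\|\phi\|$, on $V_-$ it is $\|\tfrac12 R\psi\|=\tfrac12\|\psi\|$), and $V_+\oplus V_-=\ell^2\oplus\ell^2$ because $I+\tfrac12 R^2$ is invertible by Neumann series. Let $S$ be the reflection with $N(S-I)=V_+$, $N(S+I)=V_-$. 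A direct computation gives $s_{12}=2R(I+\tfrac12 R^2)^{-1}$, whose range is $R\ell^2\subsetneq\ell^2$; so $s_{12}$ is injective with closed range but not onto, even though $S^2=I$ and both null spaces satisfy hypothesis~(2). Thus surjectivity needs additional input. The paper supplies it by specialising to $A=\mathrm{Id}$, where $\uT=S$ and $\sgn(\uT)=\sgn(S)$ is explicitly the matrix built from Riesz transforms (or the Hilbert transform when $n=1$), for which all six block operators are visibly invertible, and then invoking the method of continuity to connect general $A$ to the identity while preserving invertibility thanks to the uniform lower bounds. Your proof omits this entirely, and without it the conclusion ``Moreover, the operators \ldots are invertible'' is unproved. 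A minor secondary issue: the alternative route you sketch for part~(2) via ``Corollary \ref{cor:spectral} with $s=1/2$'' is not established in the paper; the intertwining route you also mention is the one that works.
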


\begin{proof} We prove (1). Recall first that $\dot \mH^{-1/2}_{\uT}= \dot\mH^{-1/2}=\dot\mH^{-1/2}_{\no} \oplus \dot\mH^{-1/2}_{\ta}$ follows from Proposition \ref{prop:sobolev}, (2) and (4).  Next, equality $\sgn(\uT)^2=I$  on $\dot \mH^{-1/2}_{\uT}$  holds by the bounded holomorphic functional calculus and $\sgn^2(z)=1$ on $\C\setminus \R$. Recall that $\nul(\sgn(\uT)- I)=\mH^{-1/2,+}_{\uT}$ and  by Corollary \ref{cor:spectral} and setting $\Gamma^+(\uT)=\Gamma_{ND}^A : \dot\mH^{-1/2}_{\no} \to \dot\mH^{-1/2}_{\ta}$, one has
 for any $u\in \dot\mH^{-1/2}_{\uT}$
 $$
 u_{\ta}= \Gamma^{+}(\uT) u_{\no} \Longleftrightarrow  \begin{bmatrix} u_{\no} \\ u_{\ta}\end{bmatrix} \in \nul(\sgn(\uT)- I)=\mH^{-1/2,+}_{\uT}.
 $$
 Similarly,   the Neumann to Dirichlet map for the lower half-space yields the corresponding operator $\Gamma^-(\uT)$ characterized by 
 for any $u\in \dot\mH^{-1/2}_{\uT}$
 $$
 u_{\ta}= \Gamma^{-}(\uT) u_{\no} \Longleftrightarrow  \begin{bmatrix} u_{\no} \\ u_{\ta}\end{bmatrix} \in \nul(\sgn(\uT)+ I)=\mH^{-1/2,-}_{\uT}.
 $$
Details are left to the reader. Thus, the second assumption of Lemma \ref{lem:key} is granted and we have lower bounds for all six operators in the statement. Thus, when $A=Id$, $\uT= S$ and so $\sgn(\uT)=\sgn(S)=\begin{bmatrix} 0 & H\\ H^* &0\end{bmatrix}$ if $n=1$ where $H$ is the Hilbert transform and $\sgn(\uT)=\sgn(S)=\begin{bmatrix} 0 & -\mR^*\\ -\mR &0\end{bmatrix}$  on $\mH^0$ if $n\ge2$. Thus, the six operators are invertible in this case and in particular onto. The ontoness for general $A$ follows from the method of continuity. This proves (1). 

To prove (2), we observe that the intertwining relation $S\sgn(T)=\sgn(\uT) S$ extends to $\dot \mH^{1/2}_{T}= \dot\mH^{1/2}$ using that $S$ is an isomorphism from $ \dot\mH^{1/2}$ onto 
 $\dot\mH^{-1/2}$. Thus, the conclusion follows straightforwardly from (1). 
\end{proof}

 \begin{lem} \label{lem:gamma}  In  $\mL(\dot \mH^{-1/2}_{\no}, \dot \mH^{-1/2}_{\ta})$, 
 $$
 \Gamma^A_{ND}= s_{12}(\uT)^{-1} (I-s_{11}(\uT))= (I-s_{22}(\uT))^{-1} s_{21}(\uT) .
 $$
\end{lem}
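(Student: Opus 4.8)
The plan is to read off both identities directly from the block decomposition of $\sgn(\uT)$ on $\dot\mH^{-1/2}=\dot\mH^{-1/2}_{\no}\oplus\dot\mH^{-1/2}_{\ta}$, combined with the description of the positive spectral subspace obtained in Corollary \ref{cor:spectral}. Write $\sgn(\uT)=\begin{bmatrix} s_{11}(\uT) & s_{12}(\uT)\\ s_{21}(\uT) & s_{22}(\uT)\end{bmatrix}$. First I would fix an arbitrary $f\in\dot\mH^{-1/2}_{\no}$ and set $u=\begin{bmatrix} f\\ \Gamma^A_{ND}f\end{bmatrix}$; by Corollary \ref{cor:spectral} this belongs to $\dot\mH^{-1/2,+}_{\uT}=\nul(\sgn(\uT)-I)$, so $\sgn(\uT)u=u$.

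Next I would expand $\sgn(\uT)u=u$ into its normal and tangential components. The normal component gives $s_{11}(\uT)f+s_{12}(\uT)\Gamma^A_{ND}f=f$, that is $s_{12}(\uT)\Gamma^A_{ND}f=(I-s_{11}(\uT))f$; the tangential component gives $s_{21}(\uT)f+s_{22}(\uT)\Gamma^A_{ND}f=\Gamma^A_{ND}f$, that is $(I-s_{22}(\uT))\Gamma^A_{ND}f=s_{21}(\uT)f$. Now I would invoke Lemma \ref{lem:invertible}(1), which guarantees that $s_{12}(\uT)$ and $I-s_{22}(\uT)=-(s_{22}(\uT)-I)$ are invertible between the relevant spaces; applying the inverses yields $\Gamma^A_{ND}f=s_{12}(\uT)^{-1}(I-s_{11}(\uT))f=(I-s_{22}(\uT))^{-1}s_{21}(\uT)f$. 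Since $f$ is arbitrary in $\dot\mH^{-1/2}_{\no}$ and every operator in sight is bounded between the stated spaces, this gives the claimed identity in $\mL(\dot\mH^{-1/2}_{\no},\dot\mH^{-1/2}_{\ta})$.

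There is no genuine obstacle here; the only points requiring a word of care are bookkeeping ones. One should note that the four shifted or off-diagonal blocks map into the correct components, which is immediate from $\sgn(\uT)\in\mL(\dot\mH^{-1/2})$ and the topological splitting $\dot\mH^{-1/2}=\dot\mH^{-1/2}_{\no}\oplus\dot\mH^{-1/2}_{\ta}$ of Proposition \ref{prop:sobolev}(2)–(4); and that the invertibility of $s_{11}(\uT)-I$ and $s_{22}(\uT)-I$ recorded in Lemma \ref{lem:invertible} is exactly the invertibility of $I-s_{11}(\uT)$ and $I-s_{22}(\uT)$ up to sign. No functional-calculus estimates beyond those already established in Proposition \ref{prop:sobolev} and Lemma \ref{lem:invertible} are needed.
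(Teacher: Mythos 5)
Your proof is correct and is essentially the same as the paper's: both read off the two formulas by applying the block-matrix identity $\sgn(\uT)\begin{bmatrix} I\\ \Gamma^+(\uT)\end{bmatrix}=\begin{bmatrix} I\\ \Gamma^+(\uT)\end{bmatrix}$ componentwise and then invoking the invertibilities of $s_{12}(\uT)$ and $I-s_{22}(\uT)$ from Lemma \ref{lem:invertible}. The paper merely states the matrix relation compactly and says "solve," where you write the two scalar equations out; the content is identical.
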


\begin{proof}
We have seen that the operators  $\Gamma^\pm(\uT)$ are bounded and invertible. To obtain the formulas, one uses their operator defining relations
$$
\begin{bmatrix} s_{11}(\uT) & s_{12}(\uT) \\ s_{21}(\uT)  & s_{22}(\uT)  \end{bmatrix}  \begin{bmatrix} I  \\ \Gamma^\pm(\uT) 
 \end{bmatrix}=  \pm \begin{bmatrix} I  \\ \Gamma^\pm(\uT)
 \end{bmatrix}, 
 $$
 and then solve for $\Gamma^\pm(\uT) $ using the invertibilities of the operators in Lemma \ref{lem:invertible}. We conclude using $ \Gamma^A_{ND}=\Gamma^+(\uT)$. (As mentioned, the operator $\Gamma^-(\uT)$ is the Neumann to Dirichlet operator for the lower half-space and has similar representations $\Gamma^-(\uT)=-s_{12}(\uT)^{-1} (I+s_{11}(\uT))= -(I+s_{22}(\uT))^{-1}s_{21}(\uT)$.)
 \end{proof}

\section{block triangular matrices}

So far, everything holds for arbitrary ($t$ independent) coefficients and can be used in full generality.
Assume that $A$ is block lower-triangular: $A(x)=\begin{bmatrix}  a(x) & 0\\ c(x) &d(x) \end{bmatrix}$. In the PDE language for $L$, this means that the conormal derivative is proportional to the $t$ derivative:  $\pd_{\nu_{A}}= a\pd_{t}$. 
 If $B=\hat A$, as in the second section, then this is equivalent to  $B$ being   block lower-triangular as well.

Let us write $B(x)=\begin{bmatrix}  a'(x) & 0\\ c'(x) &d'(x) \end{bmatrix}$, which we identify with the operator of multiplication by $B$. Then $\bet=\Pi B\Pi$ also has the same structure $\begin{bmatrix}  \alpha & 0\\ \gamma &\delta  \end{bmatrix}$, where $\alpha,\gamma,\delta $ are the following operators: if $n=1$, they are the operators of  multiplication by $a',c',d'$ respectively. If $n\ge 2$, then $\alpha=a'$,  $\gamma=\mR\mR^*c'$ and $\delta =\mR\mR^*d'\mR\mR^*$.

\begin{lem} If the coefficients are block lower-triangular then $s_{12}(\uT)$ is invertible from    $\mH^0_{\ta}$ onto $\mH^0_{\no}$. 
\end{lem}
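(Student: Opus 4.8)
The plan is to exploit the block lower-triangular structure to compute $s_{12}(\uT)$ explicitly enough to see its invertibility, using the already-established general facts. We know from Lemma~\ref{lem:invertible}~(1) that $s_{12}(\uT)$ is one-one with closed range in $\mL(\dot\mH^{-1/2}_{\ta},\dot\mH^{-1/2}_{\no})$; since the present statement is about the $L^2$ spaces $\mH^0_{\ta}$ and $\mH^0_{\no}$, the first step is to check that $s_{12}(\uT)$ maps $\mH^0_{\ta}=\dot\mH^0_{\ta}$ boundedly into $\mH^0_{\no}=\dot\mH^0_{\no}$ with the same lower bound, which follows from Proposition~\ref{prop:sobolev}~(1),~(3) (boundedness of $b(\uT)$ on $\dot\mH^s_{\uT}$ for all $s$, hence on $s=0$) together with the fact that $\sgn(\uT)^2=I$ on $\mH^0$ and hence the matrix identities used in Lemma~\ref{lem:key} persist at the $L^2$ level. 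Thus it remains only to prove \emph{surjectivity} of $s_{12}(\uT):\mH^0_{\ta}\to\mH^0_{\no}$.

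Next I would bring in the triangular structure: since $\bet=\begin{bmatrix}\alpha&0\\\gamma&\delta\end{bmatrix}$ is block lower-triangular on $\mH^0=\mH^0_{\no}\oplus\mH^0_{\ta}$, and $\uT=\dirac B_{|\mH^0}=S\bet$, one can compute how $\sgn(\uT)$ respects the triangular structure. The key observation is the similarity relation $\bet\uT=T\bet$ from the proof of Theorem~\ref{thm:TuTi}~(2), so $\sgn(\uT)=\bet^{-1}\sgn(T)\bet$, and likewise the intertwining $S\sgn(T)=\sgn(\uT)S$ from Lemma~\ref{lem:invertible}. The cleanest route is probably to write $\sgn(\uT)=\begin{bmatrix}s_{11}&s_{12}\\s_{21}&s_{22}\end{bmatrix}$ and express the entries via the holomorphic functional calculus of $\uT$; because $\bet$ is lower-triangular and $S=\begin{bmatrix}0&\divv\\-\nabla&0\end{bmatrix}$ is anti-diagonal, conjugating and using $\uT=S\bet$ one can track exactly which compositions of $\alpha,\gamma,\delta$ and the Riesz transforms appear in $s_{12}(\uT)$. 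I expect that $s_{12}(\uT)$ factors as $\alpha^{-1}$ (or $(\alpha')^{-1}$) times a fixed operator built from Riesz transforms and the signum of an \emph{auxiliary} second-order operator of Kato-square-root type — this is the $Tb$/Kato connection advertised in the introduction. Invertibility of that auxiliary piece on $L^2$ is then exactly the Kato square root estimate (or a known consequence of it via \cite{AHLMT}, \cite{AKMc}), and $\alpha$ is invertible because $B$ is strictly accretive and $\alpha=a'$ (or $\mathcal R\mathcal R^*a'$, which is invertible on $\mH^0_{\no}$).

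The main obstacle I anticipate is making the explicit identification of $s_{12}(\uT)$ precise: one must carefully carry the orthogonal projection $\Pi$ and the isometry $V=\begin{bmatrix}I&0\\0&-\mathcal R\end{bmatrix}$ through the functional calculus so that $\sgn(\uT)$ transported via $V^{-1}(\cdot)V$ becomes the signum of a concrete bisectorial operator on $L^2(\R^n;\C^2)$ whose off-diagonal block is manifestly of the form (invertible accretive multiplier)$\times$(sgn of a Kato-type operator). An alternative, possibly cleaner, packaging is to use Lemma~\ref{lem:gamma}: $s_{12}(\uT)=(I-s_{11}(\uT))\,(\Gamma^A_{ND})^{-1}$ in the appropriate sense, so surjectivity of $s_{12}(\uT)$ onto $\mH^0_{\no}$ would follow once one knows $\Gamma^A_{ND}$ maps $\mH^0_{\no}$ \emph{into} (a dense subspace of, or all of) $\mH^0_{\ta}$ and $I-s_{11}(\uT)$ is surjective on $\mH^0_{\no}$ — but this seems to require more, so I would pursue the direct computation. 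Once surjectivity is in hand, the open mapping theorem upgrades the one-one-with-closed-range-plus-dense-range conclusion to a bounded inverse, completing the proof.
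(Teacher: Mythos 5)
Your overall strategy is pointed in the right direction: you correctly identify that one should exploit the block triangular structure through the relation between $\sgn(\uT)$ and $\sgn(T)$ (equivalently $\bet\sgn(\uT)\Pi = \sgn(T)\bet$), and that the Kato square root theorem is the decisive harmonic-analysis input. However, there are two genuine problems.

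First, your opening step is wrong. You claim that the lower bound for $s_{12}(\uT)$ persists ``with the same constant'' on $\mH^0 = L^2$, because $\sgn(\uT)^2 = I$ there and hence ``the matrix identities used in Lemma~\ref{lem:key} persist at the $L^2$ level.'' But the lower bounds in Lemma~\ref{lem:key} are \emph{not} algebraic consequences of the involution property $S^2 = I$ alone: they require hypothesis~(2), the comparability $\|u_\no\| \approx \|u_\ta\|$ on the spectral subspaces. In the paper this comparability is available on $\dot\mH^{-1/2}$ because those spectral subspaces are the traces of energy solutions (Corollary~\ref{cor:spectral}) and both $\Gamma^A_{ND}$ and $\Gamma^A_{DN}$ are bounded there (Theorem~\ref{thm:inverse}). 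On $\mH^0 = L^2$, that comparability is precisely the two-sided Rellich estimate $\|\nabla_{tan}u\|_2 \approx \|\pd_A u\|_2$, which the paper explicitly says it does \emph{not} know and does \emph{not} expect for triangular coefficients. So assuming the $L^2$ lower bound for free begs the question (and would in fact prove more than the theorem asserts).

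Second, you are missing the mechanism by which $L^2$ invertibility is actually obtained, which is \emph{interpolation}. The paper derives from the intertwining relation the identity $\alpha\, s_{12}(\uT)\mR\mR^* = s_{12}(T)\,\delta$, hence $s_{12}(\uT)\mR = a'^{-1}(s_{12}(T)\mR)(\mR^* d'\mR)$ in $\mL(L^2)$. This shows $s_{12}(\uT)\mR$ is invertible from the \emph{twisted} space $(\mR^* d'\mR)^{-1}\dot H^{1/2}$ onto $a'^{-1}\dot H^{1/2}$. Combined with the invertibility from $\dot H^{-1/2}$ onto $\dot H^{-1/2}$ (Lemma~\ref{lem:invertible}, coming from the energy theory), one interpolates; the crucial point is that the interpolation midpoints are $L^2$:
$$
[a'^{-1}\dot H^{1/2}, \dot H^{-1/2}]_{1/2} = L^2, \qquad [(\mR^* d'\mR)^{-1}\dot H^{1/2}, \dot H^{-1/2}]_{1/2} = L^2,
$$
and these identities are exactly equivalent to the one- and $n$-dimensional Kato square root estimates. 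You gesture at ``$s_{12}(\uT)$ factors as $\alpha^{-1}$ times (sgn of a Kato-type operator)'' and conclude invertibility directly on $L^2$, but that is not how the factorization works: $s_{12}(\uT)$ is related to $s_{12}(T)$, not to a signum of an auxiliary divergence-form operator, and on its own this factorization only gives invertibility between twisted $\dot H^{1/2}$-type spaces, not on $L^2$. The interpolation step, not a direct factorization, is the heart of the argument and is absent from your proposal.
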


\begin{proof} We write the proof when $n\ge 2$. The proof when $n=1$ is similar and simpler  as we do not have to consider the Riesz transforms. By Lemma \ref{lem:invertible}, $s_{12}(\uT)$ is invertible from  $\dot\mH^{-1/2}_{\ta}$ onto  $\dot\mH^{-1/2}_{\no}$. By the characterization of  $\dot\mH^{-1/2}_{\ta}$ using $\mR$, this is equivalent to $s_{12}(\uT)\mR$ invertible from $\dot H^{-1/2}(\R^n)$ onto itself.  Remark that the same holds for
$s_{12}(T)\mR$ from $\dot H^{1/2}(\R^n)$ onto itself. Thus, looking at the upper off-diagonal coefficient in the relation $\bet \sgn(\uT)\Pi=\sgn(T) \bet$ between bounded operators for the $L^2$ topology, we find
$$\alpha s_{12}(\uT)\mR\mR^* = s_{12}(T) \delta. $$
Hence we have the equality in $\mL(L^2(\R^n))$
$$
s_{12}(\uT)\mR = a'^{-1} (s_{12}(T)\mR) (\mR^*d'\mR)
$$
and this implies that $s_{12}(\uT)\mR$ extends to an invertible operator from $(\mR^*d'\mR)^{-1}\dot H^{1/2}(\R^n)$ onto $a'^{-1}\dot H^{1/2}(\R^n)$.  Using the complex interpolation equalities
$$[a'^{-1}\dot H^{1/2}(\R^n), \dot H^{-1/2}(\R^n)]_{1/2}= L^2(\R^n)$$ and 
$$[(\mR^*d'\mR)^{-1}\dot H^{1/2}(\R^n), \dot H^{-1/2}(\R^n)]_{1/2}= L^2(\R^n),$$ we obtain the desired conclusion. 

It remains to explain the interpolation equalities. The first one was actually first observed (without proof) in \cite{McM} as a consequence of the solution of the 1-dimensional  Kato square root problem \cite{CMcM} using that $a'$ is an accretive function. See  \cite[Section 6]{McN} for some account.  It can also be seen as a consequence of the Tb theorem. See \cite[Th\'eor\`eme 9]{DJS} for an explicit statement. 

The second equality is  a consequence of the solution of the $n$-dimensional Kato square root problem  \cite{AHLMT} and the results  in \cite{AMcN} as follows. Remark that $\mR^*d'\mR$ is a strictly accretive  and bounded operator on $L^2(\R^n)$.  Combining \cite[Theorems 5.3, 7.2, 7.3]{AMcN} using that $\dot H^s(\R^n)$ is defined by the semi-norms $\|(-\Delta)^{s/2}f\|_{2}$, we have
$$[(\mR^*d'\mR)^{-1}\dot H^{1/2}(\R^n), \dot H^{-1/2}(\R^n)]_{1/2}
= \dot \mH^0_{\mT},
$$
where $\mT$ is the sectorial operator $ (-\Delta)^{1/2}(\mR^*d'\mR)$. But  $\dot \mH^0_{\mT}= L^2(\R^n)$ means that $\mT$ has a bounded holomorphic functional calculus on $L^2(\R^n)$, which is the same as $(\mR^*d'\mR)(-\Delta)^{1/2}$ has a bounded holomorphic functional calculus on $L^2(\R^n)$.  By  \cite[Theorem 10.1]{AMcN}, this is equivalent to  the claim 
$$\|L_{\ta}^{1/2}f\|_{2}\approx \|(-\Delta)^{1/2}f\|_{2},  \quad f\in \dom(L_{\ta}^{1/2}),$$ where $L_{\ta}$ is the operator $(-\Delta)^{1/2}(\mR^*d'\mR)(-\Delta)^{1/2}$. As $L_{\ta}$ is nothing but the divergence operator $-\divv d' \nabla$, the claim is proved in \cite{AHLMT}. 
\end{proof}

\begin{thm}\label{thm:neu}
The Neumann problem for operators $L$ with block lower-triangular, $t$-independent  coefficients $A$ is well-posed for $L^2$ data. 
\end{thm}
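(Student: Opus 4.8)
The plan is to convert the well-posedness statement into an assertion about boundedness and invertibility of the Neumann-to-Dirichlet map $\Gamma^A_{ND}$ in the $L^2$ topology, and then to read this off from the operator-theoretic machinery already assembled. Recall that solving the Neumann problem with $L^2$ data means: given Neumann datum in $\dot\mH^{0}_{\no} = L^2(\R^n)$ (equivalently $\pd_A u_{|t=0} \in L^2$), there is a unique solution $u$ of $Lu=0$ in the appropriate class with $\|\wt N(\nabla u)\|_2 \approx \|\pd_A u_{|t=0}\|_2$, and conversely such control of the solution controls the full conormal gradient at the boundary, in particular the tangential gradient $\nabla_x u_{|t=0}$. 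By Theorem \ref{thm:TuTi}(1), weak solutions with $\|\wt N(\nabla u)\|_2 < \infty$ are exactly $\nabla_A u = e^{-t\uT} H_0$ with $H_0 \in \mH^{0,+}_{\uT}$ and $\|H_0\|_2 \approx \|\wt N(\nabla u)\|_2$. So the Neumann problem is well-posed for $L^2$ data precisely when the map $H_0 \mapsto (H_0)_{\no}$, i.e. the projection $\mH^{0,+}_{\uT} \to \mH^0_{\no}$, is an isomorphism onto $\mH^0_{\no}$; its inverse is then exactly the graph map $f \mapsto \begin{bmatrix} f \\ \Gamma^A_{ND} f\end{bmatrix}$.

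First I would make this reduction precise, using Corollary \ref{cor:spectral} (in the $L^2$, i.e. $s=0$, version) to identify $\mH^{0,+}_{\uT}$ as the graph of $\Gamma^A_{ND} : \mH^0_{\no} \to \mH^0_{\ta}$, provided we already know this operator is bounded and invertible on $L^2$. Then I would invoke Lemma \ref{lem:gamma}: the identity
$$
\Gamma^A_{ND} = s_{12}(\uT)^{-1}\bigl(I - s_{11}(\uT)\bigr)
$$
holds as bounded operators between the relevant spaces. The operators $s_{11}(\uT)$ and $s_{12}(\uT)$ are entries of $\sgn(\uT)$, which has a bounded holomorphic functional calculus on $\mH^0$, so $I - s_{11}(\uT)$ and $s_{12}(\uT)$ are bounded on $L^2$ at the level $s=0$ with no hypothesis on $A$. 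What is needed is that $s_{12}(\uT)$ is \emph{invertible} on $L^2$, and precisely this is the content of the last lemma of the excerpt: for block lower-triangular coefficients, $s_{12}(\uT)$ is invertible from $\mH^0_{\ta}$ onto $\mH^0_{\no}$. Feeding this into the formula shows $\Gamma^A_{ND}$ extends to a bounded operator $\mH^0_{\no} \to \mH^0_{\ta}$.

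For invertibility of $\Gamma^A_{ND}$ on $L^2$ — which gives that $L^2$ control of the tangential trace conversely controls the conormal derivative, hence uniqueness and the full norm equivalence — I would argue that $\sgn(\uT)$ again satisfies the hypotheses of Lemma \ref{lem:key} on $\mH^0 = \mH^0_{\no}\oplus\mH^0_{\ta}$: hypothesis (1) is the functional calculus identity $\sgn(\uT)^2 = I$, and hypothesis (2) is the two-sided estimate $\|u_\no\|_2 \approx \|u_\ta\|_2$ on the spectral subspaces $\mH^{0,\pm}_{\uT}$, which for $\uT = \dirac B$ on $\mH^0$ is exactly the known $L^2$ equivalence $\|\pd_A u_{|t=0}\|_2 \approx \|\nabla_x u_{|t=0}\|_2$ on spectral subspaces, i.e. the hard direction coming from the Kato-type estimate already used. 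This yields that $s_{12}(\uT)$, $I \pm s_{11}(\uT)$, etc. are one-one with closed range; combined with the surjectivity of $s_{12}(\uT)$ just proved (block triangular case) and the analogous argument for $\Gamma^-$, one gets both $\Gamma^A_{ND}$ bounded invertible on $L^2$. Then Corollary \ref{cor:spectral} at level $s=0$ closes the loop: $\mH^{0,+}_{\uT}$ is the graph of this bounded invertible $\Gamma^A_{ND}$, so by Theorem \ref{thm:TuTi}(1) every $L^2$ Neumann datum produces a unique solution with the desired non-tangential control, and conversely. I would finally spell out the equivalence $\|\wt N(\nabla u)\|_2 \approx \|\pd_A u_{|t=0}\|_2 \approx \|\nabla_x u_{|t=0}\|_2$ as the quantitative form of well-posedness.

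The main obstacle is not any single computation here but keeping straight the interplay of the three Sobolev scales: the factorisation in Lemma \ref{lem:gamma} and the invertibility in Lemma \ref{lem:invertible} are stated at the level $s = -1/2$, whereas well-posedness for $L^2$ \emph{data} requires everything at level $s=0$ on $\mH^0$. The delicate point is therefore that the last lemma of the excerpt upgrades invertibility of $s_{12}(\uT)$ from $\mH^{-1/2}_\ta \to \mH^{-1/2}_\no$ to $\mH^0_\ta \to \mH^0_\no$ using the interpolation identities built on the one- and $n$-dimensional Kato estimates — and it is this $L^2$-level invertibility, only available in the block triangular situation, that is the true input. Once it is in hand, the passage to well-posedness of the Neumann problem is essentially bookkeeping with the representation theorems above.
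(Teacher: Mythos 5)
Your proposal's existence argument is correct and matches the paper's: Lemma \ref{lem:gamma} gives $\Gamma^A_{ND}= s_{12}(\uT)^{-1}(I-s_{11}(\uT))$, the $L^2$ invertibility of $s_{12}(\uT)$ for block lower-triangular $A$ then yields boundedness of $\Gamma^A_{ND}\colon \dot\mH^{0}_{\no} \to \dot\mH^{0}_{\ta}$, and for $f\in L^2$ one forms $H_0 \in \mH^{0,+}_{\uT}$ with $(H_0)_\no=f$, $(H_0)_\ta=\Gamma^A_{ND}f$ and $\|H_0\|_2\approx\|f\|_2$, then applies Theorem \ref{thm:TuTi}(1).

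However, your paragraph asserting invertibility of $\Gamma^A_{ND}$ on $L^2$ contains a genuine error, and the claim is in any case unnecessary. You assert that hypothesis (2) of Lemma \ref{lem:key} holds for $\sgn(\uT)$ on $\mH^0$ (i.e.\ at the level $s=0$), namely the two-sided estimate $\|u_\no\|_2\approx\|u_\ta\|_2$ on the spectral subspaces $\mH^{0,\pm}_{\uT}$, and attribute this to the Kato square-root estimate. But that estimate yields the two-sided bound only in the block-\emph{diagonal} case; for block lower-triangular $A$ the paper proves only one direction, $\|\nabla_x u_{|t=0}\|_2 \lesssim \|\pd_A u_{|t=0}\|_2$, and explicitly disclaims the other --- that is the announced novelty in the abstract. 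Had $\Gamma^A_{ND}$ been invertible on $L^2$, then $\Gamma^A_{DN}$ would also be $L^2$-bounded and one would have the full Rellich estimate, which is precisely what is not asserted. Hypothesis (2) of Lemma \ref{lem:key} is only available at $s=-1/2$, where it is supplied by the energy estimates behind Lemma \ref{lem:invertible}. Fortunately, invertibility of $\Gamma^A_{ND}$ is not needed for well-posedness. By Theorem \ref{thm:TuTi}(1) the trace $H_0\in\mH^{0,+}_{\uT}$ of the conormal gradient is uniquely determined by the solution, and the normal-component map $\mH^{0,+}_{\uT}\to\mH^0_\no$ is injective by density: on the dense subset $\mH^{0,+}_{\uT}\cap\dot\mH^{-1/2}$ one has $H_\ta=\Gamma^A_{ND}H_\no$, so by $L^2$-boundedness of $\Gamma^A_{ND}$ and a limiting argument $H_\no=0$ forces $H_\ta=0$. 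Together with $\|\wt N(\nabla u)\|_2\approx\|H_0\|_2\approx\|f\|_2$ this closes existence, uniqueness and the \emph{a priori} estimate without ever invoking the reverse Rellich inequality. Drop the invertibility paragraph and the proof coincides with the paper's.
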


\begin{proof} From the   relation $ \Gamma^A_{ND}= s_{12}(\uT)^{-1} (I-s_{11}(\uT))$ and the previous lemma, we have that $\Gamma^A_{ND}$ is bounded from $\dot \mH^{0}_{\no}$ to 
$\dot \mH^{0}_{\ta}$. Thus, given a Neumann data $f\in L^2(\R^n)$, $\Gamma^A_{ND}f \in L^2(\R^n, \C^n)$, and from this it is easy to conclude that $H_{0}=\begin{bmatrix} f\\ \Gamma^A_{ND}f\end{bmatrix} \in \mH^{0,+}_{\uT}$ with $\|H_{0}\|_{2}\approx \|f\|_{2}$. By Theorem \ref{thm:TuTi}, (i),  we can define a weak solution $u$ of $Lu=0$ with 
 $ \|\wt N(\nabla u)\|_{2} \approx \|H_{0}\|_{2}$ by $\nabla_{A}u=e^{-t\uT} H_{0}$.
\end{proof}

\begin{rem}\label{sneiberg} Under the same assumptions, 
by interpolation, $\Gamma^A_{ND}$ is bounded from $\dot \mH^{-s}_{\no}$ to 
$\dot \mH^{-s}_{\ta}$ for $0 \le s\le 1/2$.  When $0<s\le 1/2$, given any $f\in \dot H^{-s}(\R^n)$  the above formalism furnishes a solution to $Lu=0$ with conormal derivative  $f$  and $\iint_{\R^{1+n}_{+}}t^{-1+2s}|\nabla_{t,x}u|^2 dt dx \approx \|f\|_{\dot H^{-s}(\R^n)}^2$. By Sneiberg's perturbation result \cite{sneiberg}, this can be pushed to $s<1/2+\ep$ for some $\ep>0$ because $s_{11}(\uT) \in \mL(\dot  \mH^{-s}_{\no})$ and $s_{12}(\uT) \in \mL(\dot \mH^{-s}_{\ta}, \dot \mH^{-s}_{\no})$  for all $s\in [0,1]$, the spaces are complex interpolation scales and invertibility holds at $s=1/2$. \end{rem}

The same arguments apply to block upper-triangular coefficients: this time, the lower off-diagonal block coefficient $c'$ vanishes but not the upper off-diagonal block coefficient $b'$.

\begin{thm}\label{thm:reg}
The regularity  problem for operators $L$ with  block upper-triangular $t$-independent  coefficients $A$ is well-posed for $L^2$ data. 
\end{thm}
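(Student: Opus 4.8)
The plan is to run the argument of Theorem~\ref{thm:neu} with the roles of $s_{12}$ and $s_{21}$ interchanged. For block upper-triangular $A$ one has $B=\hat A=\begin{bmatrix} a' & b' \\ 0 & d'\end{bmatrix}$, so $\bet=\Pi B\Pi$ is again block upper-triangular, of the form $\begin{bmatrix}\alpha & \beta \\ 0 & \delta\end{bmatrix}$ with $\alpha=a'$ and $\delta=\mR\mR^*d'\mR\mR^*$ when $n\ge 2$ (and $\alpha=a'$, $\delta=d'$ when $n=1$). The key step is the analogue of the block lower-triangular lemma: \emph{if the coefficients are block upper-triangular then $s_{21}(\uT)$ is invertible from $\mH^0_{\no}$ onto $\mH^0_{\ta}$.} Granting this, the factorisation of Lemma~\ref{lem:gamma}, rewritten as $\Gamma^A_{DN}=(\Gamma^A_{ND})^{-1}=s_{21}(\uT)^{-1}(I-s_{22}(\uT))$, immediately gives what we need.

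To prove the lemma I would start from Lemma~\ref{lem:invertible}, which already provides that $s_{21}(\uT)$ is invertible from $\dot\mH^{-1/2}_{\no}$ onto $\dot\mH^{-1/2}_{\ta}$ and that $s_{21}(T)$ is invertible from $\dot\mH^{1/2}_{\no}$ onto $\dot\mH^{1/2}_{\ta}$. Since both operators range in the curl-free (tangential) space, on which $\mR\mR^*$ acts as the identity, I can write $s_{21}(\uT)=\mR\sigma$ and $s_{21}(T)=\mR\sigma'$ with $\sigma=\mR^*s_{21}(\uT)$, $\sigma'=\mR^*s_{21}(T)$; the above says $\sigma$ is invertible on $\dot H^{-1/2}(\R^n)$ and $\sigma'$ on $\dot H^{1/2}(\R^n)$. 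Next, reading off the \emph{lower}-left $(\ta,\no)$ block of the intertwining identity $\bet\,\sgn(\uT)\,\Pi=\sgn(T)\,\bet$ between $L^2$-bounded operators, and using $\alpha=a'$, $\delta=\mR\mR^*d'\mR\mR^*$, $\mR^*\mR=I$, one obtains the identity $\mR^*d'\mR\,\sigma=\sigma'a'$ in $\mL(L^2(\R^n))$, i.e. $\sigma=(\mR^*d'\mR)^{-1}\sigma'a'$. Hence $\sigma$ extends to an invertible operator from $a'^{-1}\dot H^{1/2}(\R^n)$ onto $(\mR^*d'\mR)^{-1}\dot H^{1/2}(\R^n)$. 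Interpolating this with the invertibility of $\sigma$ between $\dot H^{-1/2}(\R^n)$ and itself, and invoking the two complex interpolation equalities already established in the proof of the block lower-triangular lemma — $[a'^{-1}\dot H^{1/2}(\R^n),\dot H^{-1/2}(\R^n)]_{1/2}=L^2(\R^n)$ (from the one-dimensional Kato/$Tb$ theorem, since $a'$ is an accretive function) and $[(\mR^*d'\mR)^{-1}\dot H^{1/2}(\R^n),\dot H^{-1/2}(\R^n)]_{1/2}=L^2(\R^n)$ (from the $n$-dimensional Kato problem, since $\mR^*d'\mR$ is strictly accretive) — shows that $\sigma$, hence $s_{21}(\uT)=\mR\sigma$, is invertible on $L^2(\R^n)$, that is from $\mH^0_{\no}$ onto $\mH^0_{\ta}$. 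The case $n=1$ is the same but simpler: there $\Pi=I$, the block identity reads $d's_{21}(\uT)=s_{21}(T)a'$, and one uses the one-dimensional interpolation equality for the accretive functions $a'$ and $d'$.

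With the lemma in hand I would conclude as in Theorem~\ref{thm:neu}. Since $s_{22}(\uT)\in\mL(\mH^0)$ and $s_{21}(\uT)^{-1}\in\mL(\mH^0_{\ta},\mH^0_{\no})$, the operator $\Gamma^A_{DN}=s_{21}(\uT)^{-1}(I-s_{22}(\uT))$ is bounded from $\dot\mH^{0}_{\ta}$ to $\dot\mH^{0}_{\no}$. Given regularity data $g\in\dot\mH^{0}_{\ta}$ (an $L^2$ tangential gradient $\nabla_x f$), we then have $\Gamma^A_{DN}g\in\dot\mH^{0}_{\no}=L^2(\R^n)$, so that $H_{0}=\begin{bmatrix}\Gamma^A_{DN}g\\ g\end{bmatrix}\in\mH^{0,+}_{\uT}$ with $\|H_{0}\|_{2}\approx\|g\|_{2}$ (using the $L^2$ version of the spectral description in Corollary~\ref{cor:spectral} together with density, exactly as in the Neumann case). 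Theorem~\ref{thm:TuTi}~(1) then produces a weak solution $u$ of $Lu=0$ with $\nabla_A u=e^{-t\uT}H_{0}$, $\|\wt N(\nabla u)\|_{2}\approx\|g\|_{2}$, and $\nabla_x u_{|t=0}=g$; uniqueness modulo constants follows from the uniqueness of $H_{0}$ in Theorem~\ref{thm:TuTi}~(1) together with the fact that an element of $\mH^{0,+}_{\uT}$ is determined by its tangential part via $\Gamma^A_{DN}$. This is well-posedness of the regularity problem with $L^2$ data.

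The only substantive work is the lemma, and the main obstacle there is purely a matter of bookkeeping: checking that reading off the lower-left rather than the upper-left off-diagonal block of the intertwining relation yields precisely the factorisation $\sigma=(\mR^*d'\mR)^{-1}\sigma'a'$, to which the two interpolation equalities of the previous lemma apply verbatim — the accretivity that is needed is that of $a'$ (equivalently of $d'$ when $n=1$) and of $\mR^*d'\mR$, both of which follow from the pointwise accretivity of $B=\hat A$. A secondary point to carry out carefully is the passage from the $\dot\mH^{-1/2}$-level statements to the $L^2$-level statement $H_{0}\in\mH^{0,+}_{\uT}$, which is the exact analogue of the corresponding step in the proof of Theorem~\ref{thm:neu}. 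I do not expect any genuinely new difficulty beyond these.
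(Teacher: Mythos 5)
Your proposal is correct and follows the paper's intended proof: establish invertibility of $s_{21}(\uT)$ from $\mH^0_\no$ onto $\mH^0_\ta$ by reading a block of the intertwining relation $\bet\sgn(\uT)=\sgn(T)\bet$ and applying the same two Kato-type complex interpolation identities, then use $\Gamma^A_{DN}=s_{21}(\uT)^{-1}(I-s_{22}(\uT))$. One point worth flagging: the paper's displayed formula in the proof of Theorem~\ref{thm:reg} reads $\mR^*s_{21}(\uT)a'=(\mR^*d'\mR)(\mR^*s_{21}(T))$, whereas reading off the $(\ta,\no)$ block of $\bet\sgn(\uT)=\sgn(T)\bet$ with $\bet=\begin{bmatrix}a' & b'\\ 0 & \delta\end{bmatrix}$, $\delta=\mR\mR^*d'\mR\mR^*$, gives $\delta s_{21}(\uT)=s_{21}(T)a'$, i.e.\ $(\mR^*d'\mR)\,\mR^*s_{21}(\uT)=(\mR^*s_{21}(T))\,a'$, which is your formula. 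Your version is the one compatible with the lower-triangular computation $\alpha s_{12}(\uT)=s_{12}(T)\delta$ in the paper (and it is what makes $\sigma$ map $a'^{-1}\dot H^{1/2}$ onto $(\mR^*d'\mR)^{-1}\dot H^{1/2}$ so that the two cited interpolation equalities apply verbatim); the paper's displayed identity appears to be a misprint. Everything else in your argument, including the passage from the $\dot\mH^{-1/2}$-level spectral description to $H_0\in\mH^{0,+}_{\uT}$ by density, matches the paper's treatment of the Neumann case.
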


\begin{proof} It follows from Theorem \ref{thm:inverse} and Lemma \ref{lem:gamma} that 
$\Gamma^A_{DN}= s_{21}(\uT)^{-1}(I-s_{22}(\uT))$ in $\mL(\dot \mH^{-1/2}_{\ta}, \dot \mH^{-1/2}_{\no})$ from  the invertibility of $s_{21}(\uT)$  from $\dot\mH^{-1/2}_{\no}$ onto  $\dot\mH^{-1/2}_{\ta}$. By Theorem \ref{thm:TuTi}, (i),  it is enough to show  the invertibility of $s_{21}(\uT)$  from $\dot\mH^{0}_{\no}$ onto  $\dot\mH^{0}_{\ta}$, that is the invertibility of $\mR^*s_{21}(\uT)$ on $L^2(\R^n)$. This is based on the interpolation  argument and the formula
$$
\mR^*s_{21}(\uT)a'= ( \mR^*d'\mR )(\mR^*s_{21}(T))
$$
in $\mL(L^2(\R^n))$ that can be checked as above using that the coefficient $c'$ vanishes. Details are left to the reader. 
\end{proof}

\begin{rem} Observations similar to the ones in Remark \ref{sneiberg} apply for regularity problems with data (the tangential gradient at the boundary) in $\dot \mH^{-s}_{\ta}$ for block upper-triangular coefficients.
\end{rem}

\begin{thm}\label{thm:dir}
The Dirichlet problem for operators $L$ with block lower-triangular $t$-independent  operators $A$ is well-posed for $L^2$ data. 
\end{thm}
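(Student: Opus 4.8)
The plan is to reduce the $L^2$ Dirichlet problem to the $L^2$ Neumann problem of Theorem~\ref{thm:neu}, using only the \emph{same} half of the Rellich inequality that theorem provides, together with the block lower-triangular structure of $\bet=\Pi B\Pi$. I understand ``the Dirichlet problem is well-posed for $L^2$ data'' as follows: for each $f\in L^2(\R^n)$ there should be a weak solution $u$ of $Lu=0$, unique modulo constants in the class $\iint_{\R^{1+n}_{+}}t|\nabla_{t,x}u|^2\,dtdx<\infty$, with $u_{|t=0}=f$ and $\big(\iint_{\R^{1+n}_{+}}t|\nabla_{t,x}u|^2\,dtdx\big)^{1/2}\approx\|f\|_{2}$ (the accompanying bound $\|\wt N_{*}u\|_{2}\lesssim\|f\|_{2}$ being then a consequence of the standard square function and non-tangential maximal estimates for $t$-independent systems from \cite{AAM,AA1}).

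First I would rephrase the problem on the boundary. By Theorem~\ref{thm:TuTi}, (2), the weak solutions in the above class are exactly $u=-(e^{-tT}\wt H_{0})_{\no}+c$ with $\wt H_{0}\in\mH^{0,+}_{T}$ and $\|\wt H_{0}\|_{2}\approx\big(\iint t|\nabla_{t,x}u|^2\big)^{1/2}$; letting $t\to0$ gives $u_{|t=0}=-(\wt H_{0})_{\no}+c$ in $L^2(\R^n)$. Since a constant in $L^2(\R^n)$ vanishes, well-posedness for $L^2$ data is \emph{equivalent} to: the scalar trace map $\mH^{0,+}_{T}\ni\wt H_{0}\mapsto(\wt H_{0})_{\no}\in\mH^{0}_{\no}=L^2(\R^n)$ is a bounded isomorphism.

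Next I would transport this to $\uT$, where Theorem~\ref{thm:neu} lives. Because $A$, hence $B$, hence $\bet=\begin{bmatrix}\alpha&0\\\gamma&\delta\end{bmatrix}$ is block lower-triangular with $\alpha,\delta$ strictly accretive (so boundedly invertible) on $\mH^{0}_{\no}$, $\mH^{0}_{\ta}$, the similarity $\bet\uT=T\bet$ makes $\bet$ a bounded isomorphism $\mH^{0,+}_{\uT}\to\mH^{0,+}_{T}$ which multiplies the normal component by $\alpha$ and alters the tangential one only by $\gamma H'_{\no}+\delta H'_{\ta}$. On the $\uT$ side, Theorem~\ref{thm:neu} asserts exactly that $H'\mapsto H'_{\no}$ is a bounded isomorphism of $\mH^{0,+}_{\uT}$ onto $L^2(\R^n)$: surjectivity is its existence part, the bound $\|H'_{\ta}\|_2=\|\Gamma^{A}_{ND}H'_{\no}\|_2\lesssim\|H'_{\no}\|_2$ is the half of Rellich it proves, and injectivity follows from the lemma preceding Theorem~\ref{thm:neu} ($s_{12}(\uT)$ is one-one on $\mH^{0}_{\ta}$). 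Composing with $\bet$: for existence, given $f$ take $H'=\begin{bmatrix}-\alpha^{-1}f\\\Gamma^{A}_{ND}(-\alpha^{-1}f)\end{bmatrix}\in\mH^{0,+}_{\uT}$, set $\wt H_{0}:=\bet H'$ so that $(\wt H_{0})_{\no}=-f$ and $\|\wt H_{0}\|_{2}\lesssim\|f\|_{2}$, and let $u:=-(e^{-tT}\wt H_{0})_{\no}$; for the a priori bound and uniqueness, given $\wt H_{0}\in\mH^{0,+}_{T}$ put $H'=\bet^{-1}\wt H_{0}$ and use $\|H'_{\no}\|_2\approx\|(\wt H_{0})_{\no}\|_2$ together with $\|H'_{\ta}\|_2\lesssim\|H'_{\no}\|_2$ to get $\|\wt H_{0}\|_{2}=\|\bet H'\|_2\lesssim\|(\wt H_{0})_{\no}\|_{2}$, hence the trace map is bounded below and $(\wt H_{0})_{\no}=0$ forces $\wt H_{0}=0$.

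I do not expect a genuine obstacle here: once $\bet\uT=T\bet$ and the $L^2$-boundedness of $\Gamma^{A}_{ND}$ (Theorem~\ref{thm:neu}) are in hand, everything above is a formal manipulation, reusing verbatim the block-triangular hypothesis and the same half of the Rellich inequality as the Neumann problem. The one step that needs care is the correct identification of the solution class — namely that for solutions with $\iint t|\nabla_{t,x}u|^2<\infty$ one has $u_{|t=0}\in L^2(\R^n)$ with the claimed norm equivalence, and the non-tangential maximal bound $\|\wt N_{*}u\|_{2}\lesssim\|f\|_{2}$ — which I would settle by quoting the square function and non-tangential maximal function estimates for $t$-independent systems in \cite{AAM,AA1}.
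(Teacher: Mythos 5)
Your argument is correct, and it takes a genuinely different route from the paper's. The paper proves Theorem~\ref{thm:dir} by \emph{duality}: it cites \cite[Proposition 17.6]{AR}, according to which the regularity problem for $L$ is well-posed with $L^2$ data and non-tangential estimate if and only if the Dirichlet problem for $L^*$ is well-posed with $L^2$ data and square function estimate, and then invokes Theorem~\ref{thm:reg} applied to $A^*$ (which is block upper-triangular when $A$ is block lower-triangular). You instead give a \emph{direct} boundary-space proof: you reduce well-posedness to showing the scalar trace $\mH^{0,+}_{T}\ni\wt H_{0}\mapsto(\wt H_{0})_{\no}$ is an isomorphism onto $L^2(\R^n)$, transport this to $\mH^{0,+}_{\uT}$ via the lower-triangular similarity $\bet\uT=T\bet$ (which multiplies the normal component by $\alpha$, a bounded invertible accretive operator), and there use the $L^2$-boundedness of $\Gamma^A_{ND}$ and injectivity of $s_{12}(\uT)$ — exactly the facts established for Theorem~\ref{thm:neu}. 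Your route buys self-containedness (no appeal to a separate duality principle, no adjoint $L^*$, no need for Theorem~\ref{thm:reg}) and makes structurally explicit what the paper only hints at: that the Dirichlet solvability here uses the \emph{same} one half of the Rellich inequality as the Neumann solvability, rather than the transposed half; the paper's route buys brevity and modularity by leaning on the existing duality machinery of \cite{AR}. One minor imprecision in your concluding paragraph: the bound $\|u_{|t=0}\|_2\lesssim\big(\iint t|\nabla_{t,x}u|^2\big)^{1/2}$ already follows from Theorem~\ref{thm:TuTi},\,(2) and the trace identity $u_{|t=0}=-(\wt H_0)_\no$, so it is not an extra fact to be quoted from \cite{AAM,AA1}; only the non-tangential maximal estimate $\|\wt N(u)\|_2\lesssim\|f\|_2$ (cited by the paper too) genuinely needs the external reference. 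This does not affect the validity of the main argument.
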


\begin{proof} Here, well-posedness is within the class of solutions with $\iint_{\R^{1+n}_{+}}t|\nabla_{t,x}u|^2 dt dx<\infty$ and by \cite{AA1}, this implies then  that $\|\wt N(u)\|_{2}\approx  \iint_{\R^{1+n}_{+}}t|\nabla_{t,x}u|^2 dt dx \approx \|u_{|t=0}\|_{2}$. 
Well-posedness follows from the duality principle \cite[Proposition 17.6]{AR} that the regularity problem for $L$  with data in $L^2$  is well-posed in the class with modified non-tangential estimate if and only if the Dirichlet problem for  $L^*$  with $L^2$ data  is well-posed in the above class.
\end{proof}

\begin{rem} We point out that the relation between $L$ and $A$ is a correspondence only when $L$ and $A$ are imposed to be self-adjoint. Otherwise,  one $L$ may be represented by many different strictly accretive matrices $A$. As a consequence, there are as many Neumann problems for $L$ as choices of $A$ since the conormal derivative depends on $A$. However, for the regularity and Dirichlet problems, the choice of $A$ is irrelevant.  Although the solution algorithms depend on the coefficients $A$, they produce in the end the same solutions (note that the tangential part of the conormal gradient is independent of the coefficients).
 
For example, one does not modify $L$ by adding to $A$ any matrix of the form $M_{\gamma}=\begin{bmatrix}  0 & \gamma(x)^t\\ -\gamma(x) & 0\end{bmatrix}$ with $\gamma$ a $\R^n$-valued, measurable and bounded function with $\divv_{x} \gamma=0.$ First, the divergence free equation implies $-\divv_{t,x} (A+M_{\gamma}) \nabla_{t,x} = -\divv_{t,x} A \nabla_{t,x}$ in the sense of forms. Second,   the accretivity constant of  $A+M_{\gamma}$ is the lower bound of $\frac 1 2 (A +M_\gamma+A^*+M_{\gamma}^*)$, but  $M_\gamma+M_{\gamma}^*=0$ when $\gamma$ is $\R^n$-valued, so this is the same as the accretivity constant of $A$.  
This argument  also shows that $A+M_{\gamma}$ remains strictly accretive if  $\gamma$ is $\C^n$-valued provided the norm of its imaginary part is not too large. 

For systems of size $m$, this example has to be modified as follows.  $\R^n$-valued is replaced by  $\mS^n$-valued where $\mS$ is the space of real and symmetric $m\times m$ matrices. The divergence free equation is understood  coefficientwise $\sum_{j=1}^n\pd_{j} \gamma_{j}^{\alpha,\beta}=0$. 
\end{rem}

\begin{rem}
Recall that any of these well-posedness results is stable under complex perturbation in $L^\infty$ within the class of $t$-independent coefficients $A$  by \cite[Theorem 2.2]{AAM} and  \cite{AA1} (the latter reference extends  the solution spaces for the  results of the former to hold).
\end{rem}

\bibliographystyle{acm}

\begin{thebibliography}{}

\end{thebibliography}


\begin{thebibliography}{10}





\bibitem{AA1}
{\sc Auscher, P., and Axelsson, A.}
\newblock Weighted maximal regularity estimates and solvability of non-smooth
  elliptic systems {I}.
\newblock Invent. Math. (2011) 184: 47--115

\bibitem{AR}
{\sc Auscher, P., and Ros\'en, A.}
\newblock Weighted maximal regularity estimates and solvability of non-smooth
  elliptic systems {II}.
\newblock To appear in Analysis \& PDE.



\bibitem{AAM}
{\sc Auscher, P., Axelsson, A., and McIntosh, A.}
\newblock Solvability of elliptic systems with square integrable boundary data.
\newblock {\em Ark. Mat. 48\/} (2010), 253--287.

\bibitem{AHLMT}
{\sc Auscher, P., Hofmann, S., Lacey, M., McIntosh, A., and Tchamitchian, P.}
\newblock The solution of the {K}ato square root problem for second order
  elliptic operators on {${\R}^n$}.
\newblock {\em Ann. of Math. (2) 156}, 2 (2002), 633--654.



\bibitem{AMcN}
{\sc Auscher, P., McIntosh, A., and Nahmod, A.}
\newblock Holomorphic functional calculi of operators, quadratic estimates and
  interpolation.
\newblock {\em Indiana Univ. Math. J. 46}, 2 (1997), 375--403.






\bibitem{AGHMc}
{\sc Axelsson, A., Grognard, R., Hogan, J., and McIntosh, A.}
\newblock Harmonic analysis of Dirac operators on Lipschitz domains. 
{\em Clifford analysis and its applications (Prague, 2000)}, 231--246, NATO Sci. Ser. II Math. Phys. Chem., 25, Kluwer Acad. Publ., Dordrecht, 2001.
\bibitem{AKMc}
{\sc Axelsson, A., Keith, S., and McIntosh, A.}
\newblock Quadratic estimates and functional calculi of perturbed {D}irac
  operators.
\newblock {\em Invent. Math. 163}, 3 (2006), 455--497.




%

\bibitem{CMcM}
{\sc Coifman, R., McIntosh, A., and Meyer, Y.}
\newblock {L'int\'egrale de Cauchy d\'efinit un op\'erateur born\'e sur
  $L^2({\R})$ pour les courbes lipschitziennes}.
\newblock {\em Ann. Math.}, 116:361--387, 1982.




\bibitem{DJS}
{\sc David, G., Journ\'e J.-L., and Semmes, S.}
\newblock {Op\'erateurs de Calder\'on-Zyg\-mund, fonctions para-accr\'etives et
  interpolation}.
\newblock {\em Rev. Mat. Iberoamericana}, 1:1--56, 1985.
%
%
%
%
%
%
%
%
%
%
%


\bibitem{McM}
{\sc McIntosh, A., and Meyer, Y.}
\newblock {Alg\`ebres d'op\'erateurs d\'efinis par des int\'e\-gra\-les
  singuli\`eres}.
\newblock {\em C. R. Acad. Sci. Paris}, 301, S\'erie 1:395--397, 1985.

\bibitem{McN}
{\sc McIntosh, A., and Nahmod, A.} 
\newblock Heat kernel estimates and functional calculi of $-b\Delta$. 
\newblock {\em Math. Scand.} 87 (2000), no. 2, 287--319.
%
%
%


\bibitem{R}{\sc Ros\'en, A.} 
\newblock Cauchy non-integral formulas, preprint. arXiv:1210.7580 [math.AP]

\bibitem{sneiberg}
{\sc {\v{S}}ne{\u\i}berg, I.~J.}
\newblock Spectral properties of linear operators in interpolation families of
  {B}anach spaces.
\newblock {\em Mat. Issled. 9}, 2(32) (1974), 214--229, 254--255.


%

\end{thebibliography}

\end{document}